\newtheorem{thm}{Theorem}[section]
\newtheorem{lem}[thm]{Lemma}
\newtheorem{rem}[thm]{Remark}
\newtheorem{defn}[thm]{Definition}
\newcommand{\bremark}{\begin{rem} \textup}
\newcommand{\eremark}{\end{rem} }
\newcommand{\cuad}{{\sqcap\kern-.68em\sqcup}}
\newcommand{\R}{{\mathbb{R}}}
\renewcommand{\rho}{\varrho}
\renewcommand{\theta}{\vartheta}
\begin{document}

\parindent 0pc
\parskip 6pt
\overfullrule=0pt

\title{Monotonicity and symmetry of singular solutions to quasilinear problems}

\author{Francesco Esposito$^{*}$}
\address{$^{**}$Dipartimento di Matematica e Informatica, UNICAL\\
Ponte Pietro  Bucci 31B \\
87036 Arcavacata di Rende, Cosenza, Italy}
\email{ esposito@mat.unical.it}

\author{Luigi Montoro$^{*}$}
\address{$^{**}$Dipartimento di Matematica e Informatica, UNICAL\\
Ponte Pietro  Bucci 31B \\
87036 Arcavacata di Rende, Cosenza, Italy}
\email{montoro@mat.unical.it}

\author{Berardino Sciunzi$^{*}$}
\address{$^{**}$Dipartimento di Matematica e Informatica, UNICAL\\
Ponte Pietro  Bucci 31B \\
87036 Arcavacata di Rende, Cosenza, Italy}
\email{ sciunzi@mat.unical.it}

\thanks{\it  Mathematics Subject
Classification: 35B06, 35B50, 35B51}

\maketitle

\begin{abstract}
We consider singular solutions to quasilinear elliptic equations under zero Dirichlet boundary condition. Under suitable assumptions on the nonlinearity we deduce symmetry and monotonicity properties of positive solutions  via an improved moving plane procedure.
\end{abstract}

\section{introduction}
We consider the problem
\begin{equation}\tag{$\mathcal P_\Gamma$}\label{problem}
\begin{cases}
-\Delta_p u=f(u) & \text{in}\,\,\Omega\setminus \Gamma  \\
u> 0 &  \text{in}\,\,\Omega\setminus \Gamma  \\
u=0 &  \text{on}\,\,\partial \Omega,
\end{cases}
\end{equation}
in a bounded smooth domain $\Omega\subset \mathbb{R}^n$ and $p>1$. The solution $u$ has a possible singularity on the critical set $\Gamma$ and in fact we shall only assume that $u$ is of class $C^{1}$ far from the critical set. Therefore the equation is understood as in the following
\begin{defn}\label{def:sol} We say that
$u\in C^{1}({\overline\Omega}\setminus \Gamma)$ is a solution to \eqref{problem} if $u=0$ on $\partial \Omega$ and
\begin{equation}\label{debil1}
\int_\Omega |\nabla u|^{p-2} (\nabla u, \nabla
\varphi)\,dx\,=\,\int_\Omega f(u)\varphi\,dx\qquad\forall \varphi\in
C^1_c(\Omega\setminus\Gamma)\,.
\end{equation}
\end{defn}
The purpose of the paper is to investigate symmetry and monotonicity properties of the solutions when the domain is assumed to have symmetry properties. This issue is well understood in the semilinear case $p=2$ when $\Gamma=\emptyset$. The symmetry of the solutions in this case can be deduced by the celebrated \emph{moving plane method}, see \cite{A,BN,GNN,serrin}. In \cite{EFS, io} the moving plane procedure has been adapted to the case when the singular set has zero capacity, in the semilinear setting $p=2$. \\

\noindent The moving plane procedure has been developed for problems involving the $p$-Laplace operator, in the standard case $\Gamma=\emptyset$, in \cite{DamPac} for $1<p<2$ and in \cite{DSJDE} for $p>2$. In fact, in our proofs, we shall borrow many techniques and ideas from \cite{DamPac,DSJDE} and from \cite{EFS, io}. However the techniques cannot be applied straightforwardly manly for two reasons. First of all the technique in \cite{EFS, io}, that works the case $p=2$, is strongly based on a homogeneity argument that fails for $p\neq2$. Furthermore, since the gradient of the solution may blows up near the critical set, then the equation may exhibit both a degenerate and a singular nature at the same time. This causes in particular that it is no longer true that the case $1<p<2$ allows to get stronger results in a easier way, as it is in the case $\Gamma=\emptyset$. In fact, having in mind this remark, we prefer to start the presentation of our results with the case $p>2$. We have the following
\begin{thm}\label{main}
Let $p>2$ and
let $u\in C^{1}({\overline \Omega}\setminus \Gamma)$ be a solution to \eqref{problem} and assume that $f$ is locally Lipschitz continuous with $f(s)>0$ for $s>0$, namely assume  ($A_f^2$).
If $\Omega$ is convex and symmetric with respect to the $x_1$-direction,
$\Gamma$  is closed with
${\operatorname{Cap}_p}(\Gamma)=0$, namely let us assume ($A_\Gamma^2$),
and
$$\Gamma\subset\{x\in \Omega\,:\, x_1=0\},$$ then it follows that   $u$
is symmetric with respect to the hyperplane $\{x_1=0\}$ and
increasing in the $x_1$-direction in $\Omega\cap\{x_1<0\}$.
\end{thm}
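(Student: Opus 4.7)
The plan is to implement the moving plane method in the $x_1$-direction, adapted to the degenerate case $p>2$ and to the presence of the singular set $\Gamma$ (of zero $p$-capacity) on the symmetry hyperplane. Let $a=\inf\{x_1 : x\in\Omega\}$; for $\lam\in(a,0]$, set $\Omega_\lam=\Omega\cap\{x_1<\lam\}$, $x_\lam=(2\lam-x_1,x_2,\ldots,x_n)$ and $u_\lam(x)=u(x_\lam)$. Define the reflected singular set $\Gamma^\lam=\{x_\lam : x\in\Gamma\}\subset\{x_1=2\lam\}$. Since $\Gamma\subset\{x_1=0\}$ we have $\Gamma\cap\Omega_\lam=\emptyset$ for $\lam<0$, but $\Gamma^\lam$ may enter $\Omega_\lam$ when $\lam>a/2$. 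The moving plane set is $\Lam=\{\lam\in(a,0]: u\le u_\mu \text{ in } \Omega_\mu\cap\{x\notin\Gamma^\mu\}\text{ for every }\mu\le\lam\}$, and the goal is to show $\sup\Lam=0$; symmetry then follows by running the procedure from the right.

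The central analytic step is a \emph{weak comparison principle} for the pair $(u,u_\lam)$ in $\Omega_\lam$. I would test the difference of the weak formulations of $-\Delta_p u=f(u)$ (satisfied by $u$) and $-\Delta_p u_\lam=f(u_\lam)$ (satisfied by $u_\lam$) against $\varphi=(u-u_\lam)^+\eta_\e^p$, where $\eta_\e\in C_c^\infty(\Omega_\lam\setminus(\Gamma\cup\Gamma^\lam))$ is a cutoff built from the zero-$p$-capacity of $\Gamma\cup\Gamma^\lam$ (so $\eta_\e\to 1$ pointwise a.e.\ and $\int|\nabla\eta_\e|^p\to 0$). On the coincidence set $\{u>u_\lam\}$ the Lipschitz assumption ($A_f^2$) produces a bounded right-hand side, while for the left-hand side one uses the pointwise monotonicity inequality valid for $p>2$,
\[
(|\nabla u|^{p-2}\nabla u-|\nabla u_\lam|^{p-2}\nabla u_\lam)\cdot\nabla(u-u_\lam)\;\ge\; C_p\bigl(|\nabla u|+|\nabla u_\lam|\bigr)^{p-2}|\nabla(u-u_\lam)|^2.
\]
Together with Young's inequality and a Poincaré inequality on the support of $(u-u_\lam)^+$, this yields an estimate of the form
\[
\int_{\Omega_\lam}\bigl(|\nabla u|+|\nabla u_\lam|\bigr)^{p-2}|\nabla(u-u_\lam)^+|^2\,dx \;\le\; C\,L\,\bigl|\mathrm{supp}(u-u_\lam)^+\bigr|^{2/n}\!\!\int_{\Omega_\lam}\!\!\bigl(|\nabla u|+|\nabla u_\lam|\bigr)^{p-2}|\nabla(u-u_\lam)^+|^2\,dx
\]
after sending $\e\to 0$, where the cutoff terms vanish thanks to $\mathrm{Cap}_p(\Gamma\cup\Gamma^\lam)=0$ and the local integrability of $|\nabla u|^{p-2}$ near $\Gamma$. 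When $|\Omega_\lam|$ is small the prefactor is $<1$, forcing $(u-u_\lam)^+\equiv 0$. This delivers both the base step (for $\lam$ close to $a$) and the ``opening'' step at $\lam_0=\sup\Lam$: for any candidate $\lam_0<0$ one chooses, via absolute continuity of the measure, a neighborhood $(\lam_0,\lam_0+\delta)$ in which the part of $\{u>u_\lam\}$ outside a compact set of strict inequality $u<u_{\lam_0}$ has arbitrarily small measure, and the weak comparison applies.

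The non-trivial obstacles are exactly where the paper warns. First, one must justify the cutoff argument: the standard trick is to use the capacitary potentials of $\Gamma\cup\Gamma^\lam$, combined with a local $L^q$ control of $|\nabla u|^{p-2}$ near $\Gamma$ from the $C^1$-regularity away from $\Gamma$, to make the error term $\int(|\nabla u|+|\nabla u_\lam|)^{p-1}|\nabla\eta_\e|(u-u_\lam)^+\,dx$ vanish in the limit; this is where $p>2$ is delicate because the gradient can blow up at $\Gamma$ yet the coefficient $(\cdot)^{p-2}$ is not bounded below near critical points of $u$ either. Second, to pass from ``$u\le u_{\lam_0}$'' to a strict separation on compacta one invokes the strong comparison principle of Damascelli--Pucci (for $p>2$), away from the zero-capacity set $\Gamma\cup\Gamma^{\lam_0}$ and from $\{\nabla u=0\}\cup\{\nabla u_{\lam_0}=0\}$.

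Once $\sup\Lam=0$ is established, the symmetric argument from the opposite side gives $u_0\le u$ on $\Omega_0$, hence $u(x)=u(-x_1,x')$ in $\Omega$; monotonicity in $x_1$ on $\Omega\cap\{x_1<0\}$ is obtained from the strict inequality $u<u_\lam$ in $\Omega_\lam$ for every $\lam\in(a,0)$ (again via the strong comparison principle applied to connected components of $\Omega_\lam\setminus(\Gamma^\lam\cup\{\nabla u=0\}\cup\{\nabla u_\lam=0\})$), yielding $\partial_{x_1}u\ge 0$ on $\Omega\cap\{x_1<0\}$ with strict sign outside the critical set. The main difficulty, as anticipated in the introduction, is the interaction between the degeneracy of $\Delta_p$ for $p>2$ and the singular blow-up of $|\nabla u|$ near $\Gamma$, which must be absorbed by the $\mathrm{Cap}_p$-cutoff machinery in the weak comparison estimate above.
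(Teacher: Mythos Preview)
Your overall strategy matches the paper's: run the moving plane, test the difference of the two equations against $(u-u_\lambda)^+\psi_\varepsilon^p$, send $\varepsilon\to 0$ using $\operatorname{Cap}_p(R_\lambda(\Gamma))=0$, and close via a Poincar\'e-type inequality on a set of small measure. However, the step you label ``a Poincar\'e inequality on the support of $(u-u_\lambda)^+$'' is where the real difficulty for $p>2$ sits, and the inequality you display does not follow from the standard Sobolev--Poincar\'e inequality. After the PDE step one has
\[
\int_{\Omega_\lambda\setminus K} (|\nabla u|+|\nabla u_\lambda|)^{p-2}|\nabla w_\lambda^+|^2\,dx \;\le\; CK_f\int_{\Omega_\lambda\setminus K} (w_\lambda^+)^2\,dx,
\]
and one needs to bound the right side back by the \emph{weighted} Dirichlet integral on the left. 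The unweighted inequality $\int (w_\lambda^+)^2\le C|\cdot|^{2/n}\int|\nabla w_\lambda^+|^2$ produces an unweighted gradient term, and since the weight $(|\nabla u|+|\nabla u_\lambda|)^{p-2}$ vanishes on the critical set $\mathcal Z_\lambda=\{\nabla u=\nabla u_\lambda=0\}$, you cannot recover the weighted integral from it. The paper closes the loop with the \emph{weighted} Poincar\'e inequality of Damascelli--Sciunzi \cite[Theorem~1.2]{DSJDE} for the weight $\rho=|\nabla u|^{p-2}$, namely $\int (w_\lambda^+)^2\le C_p(|\Omega_\lambda\setminus K|)\int \rho\,|\nabla w_\lambda^+|^2$; this is available because $1/\rho=|\nabla u|^{2-p}\in L^t$ for some $t>n/2$ when $-\Delta_p u=f(u)$ with $f>0$, a nontrivial regularity result from \cite{DSJDE}. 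Since $p>2$ gives $\rho\le(|\nabla u|+|\nabla u_\lambda|)^{p-2}$, this feeds back into the left side. Your proposal must invoke this weighted inequality explicitly; the factor $|\cdot|^{2/n}$ you wrote is the wrong object.

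Two further points. First, in $\Omega_\lambda$ with $\lambda<0$ the function $u$ is $C^1$ (since $\Gamma\subset\{x_1=0\}$), so it is $|\nabla u_\lambda|$, not $|\nabla u|$, that may blow up near $R_\lambda(\Gamma)$, and there is no a priori control of $\int|\nabla u_\lambda|^p$ there. Consequently the cutoff cross term cannot be dispatched by a single Young/H\"older step as you suggest; the paper splits $\Omega_\lambda\setminus K$ into $A_\lambda=\{|\nabla u_\lambda|<\dot C|\nabla u|\}$ (where the weight is dominated by the bounded $|\nabla u|^{p-2}$) and $B_\lambda=\{|\nabla u_\lambda|\ge\dot C|\nabla u|\}$ (where $|\nabla w_\lambda|\sim|\nabla u_\lambda|$, so a different Young split returns the main weighted term); see Lemma~\ref{leaiuto}. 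Second, before invoking the strong comparison principle at $\lambda_0$ one must exclude $u\equiv u_{\lambda_0}$ on some connected component of $\Omega_{\lambda_0}\setminus\mathcal Z_{\lambda_0}$; the paper does this in Lemma~\ref{conncomp} via a test-function argument that uses $f>0$, which you should also incorporate.
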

Although  the technique that we will develop to prove Theorem \ref{main} works for any $p>2$, the result is stated for $2<p\leq n$ since there are no sets of zero $p$-capacity when $p>n$.\\

\noindent Surprisingly the case $1<p<2$ presents more difficulties related to the fact that, as already remarked, the operator may degenerate near the critical set even if $p<2$. We will therefore need an accurate analysis on the behaviour of the gradient of the solution near $\Gamma$. We carry out such analysis exploiting the results of \cite{PQS} (therefore we shall require a growth assumption on the nonlinearity) and a blow up argument. The result is the following:
\begin{thm}\label{mainbis}
Let $1<p<2$ and
let $u\in C^{1}({\overline \Omega}\setminus \Gamma)$ be a solution to \eqref{problem} and assume that $f$ is  locally Lipschitz continuous with $f(s)>0$ for $s>0$ and has subcritical growth, namely let us assume ($A_f^1$).
Assume that  $\Gamma$  is closed and that
 $\Gamma = \{0\}$ for $n=2$, while  $\Gamma\subseteq\mathcal M$ for some compact $C^2$ submanifold
$\mathcal M$ of dimension $m\leq n-k$, with $k \geq \frac{n}{2}$  for $n>2$, see ($A_\Gamma^1$).
Then, if $\Omega$ is convex and symmetric with respect to the $x_1$-direction
and
$$\Gamma\subset\{x\in \Omega\,:\, x_1=0\},$$  it follows that   $u$
is symmetric with respect to the hyperplane $\{x_1=0\}$ and
increasing in the $x_1$-direction in $\Omega\cap\{x_1<0\}$.
\end{thm}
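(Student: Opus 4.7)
The plan is to run a moving plane procedure along the $x_1$-direction, combining the weighted Sobolev scheme of \cite{DamPac} for $1<p<2$ with the treatment of singular sets developed in \cite{EFS, io}. Let $a := \inf\{x_1 : x \in \Om\}$. For $\la\in(a,0]$ put $\Sigma_\la := \Om\cap\{x_1<\la\}$, let $x^\la := (2\la-x_1,x_2,\ldots,x_n)$ denote reflection across $\{x_1=\la\}$, set $u_\la(x) := u(x^\la)$, and let $\Gamma^\la := \{x\in\Sigma_\la : x^\la\in\Gamma\}\subset\{x_1=2\la\}$ be the reflected singular set. Since $\Gamma\subset\{x_1=0\}$ and $\la\leq 0$, one has $\Gamma\cap\Sigma_\la=\emptyset$, so $u\in C^1(\overline{\Sigma_\la})$, while $u_\la\in C^1(\overline{\Sigma_\la}\setminus\Gamma^\la)$ satisfies $-\Delta_p u_\la=f(u_\la)$ on $\Sigma_\la\setminus\Gamma^\la$. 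The aim is to prove
\[
u\leq u_\la \qquad \text{in } \Sigma_\la\setminus\Gamma^\la \qquad \text{for every } \la\in(a,0].
\]
Applied at $\la=0$, together with the reverse inequality obtained by running the same procedure from the right, this yields the symmetry; strict monotonicity in $\Om\cap\{x_1<0\}$ then follows from the strong comparison principle on the connected components of $\Sigma_0\setminus(\Gamma\cup\{\nabla u=0\})$.

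Define $\Lam := \sup\{\la\in(a,0] : u\leq u_\mu \text{ in } \Sigma_\mu\setminus\Gamma^\mu \text{ for every } \mu\in(a,\la]\}$. The starting step $\Lam>a$ is routine: for $\la$ close enough to $a$ one has $\Gamma^\la=\emptyset$ (since $2\la<a$), and the weak comparison principle for narrow caps from \cite{DamPac} applies directly. The core of the argument is to show $\Lam=0$, which I would prove by contradiction: assuming $\Lam<0$, continuity yields $u\leq u_\Lam$ on $\Sigma_\Lam\setminus\Gamma^\Lam$, and the plan is to extend the inequality to $\mu\in(\Lam,\Lam+\tau)$ by a weak comparison principle on $\Sigma_\mu\setminus N_\delta(\Gamma^\mu)$, where $N_\delta(\Gamma^\mu)$ is a tubular neighbourhood of the singular set whose Lebesgue measure can be made arbitrarily small.

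The central obstacle, and the genuinely new difficulty compared with the case $p>2$ of Theorem \ref{main}, is that for $1<p<2$ the equation is simultaneously degenerate at $\{\nabla u=0\}$ and singular near $\Gamma^\mu$; as emphasised in the introduction, $|\nabla u|$ may blow up near $\Gamma$. The test function $\psi_\ve^p(u-u_\mu)^+$ plugged into \eqref{debil1} for both $u$ and $u_\mu$ must therefore be cut off in an $\ve$-neighbourhood of $\Gamma^\mu$, and to let $\ve\to 0$ one needs the quantitative estimate
\[
\int_{N_\ve(\Gamma)\cap\Om} |\nabla u|^{q}\,\dx \tende{\ve}{0} 0
\]
for a suitable exponent $q=q(p,n,m)$. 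This is the step where the subcritical hypothesis $(A_f^1)$ is essential: a doubling-and-rescaling argument in the spirit of \cite{PQS}, applied to sequences $x_k\to\Gamma$ and rescalings $v_k(y):=\alp_k^{\beta}u(x_k+\alp_k y)$ with the natural scaling exponent, together with the Liouville theorem for the $p$-Laplacian in $\R^n$ and in the half-space, yields a pointwise decay $|\nabla u(x)|\leq C\,\dist(x,\Gamma)^{-\gam}$ with an explicit $\gam$. The dimensional hypothesis $m\leq n-k$, $k\geq n/2$, gives $|N_\ve(\mathcal M)\cap\Om|\leq C\ve^{k}$, which combined with the previous decay forces the displayed integral to vanish as $\ve\to 0$.

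With the cut-off harmless in the limit, the standard vector inequality
\[
\bigl(|\xi|^{p-2}\xi-|\eta|^{p-2}\eta,\xi-\eta\bigr)\geq C(|\xi|+|\eta|)^{p-2}|\xi-\eta|^{2},\qquad 1<p<2,
\]
applied with $\xi=\nabla u$, $\eta=\nabla u_\mu$, together with the local Lipschitz bound on $f$, produces a weighted Sobolev estimate for $(u-u_\mu)^+$ with weight $\rho^{p-2}$, $\rho:=|\nabla u|+|\nabla u_\mu|$, on $\Sigma_\mu\setminus N_\delta(\Gamma^\mu)$. A standard iteration on the Lebesgue measure of the cap, in the spirit of \cite{DamPac,DSJDE}, then forces $(u-u_\mu)^+\equiv 0$ once $|\Sigma_\mu\setminus N_\delta(\Gamma^\mu)|$ is small enough, contradicting the maximality of $\Lam$ and proving $\Lam=0$. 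I expect the delicate point to be precisely the calibration of the blow-up decay exponent $\gam$ against the codimension of $\mathcal M$: the constants coming from the doubling argument must match exactly those required by the weighted Sobolev inequality that closes the moving plane.
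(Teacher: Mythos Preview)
Your overall architecture matches the paper's: moving planes, cutoff test functions near the reflected singular set, a PQS-type blow-up analysis to control $|\nabla u|$ near $\Gamma$, and a weighted inequality to close. But you have misplaced the role of the gradient decay. In the paper the cutoff terms (those carrying $\nabla\psi_\varepsilon$) vanish as $\varepsilon\to 0$ simply because $\operatorname{Cap}_p(\Gamma)=0$ under $(A_\Gamma^1)$; no control on $\int_{N_\varepsilon(\Gamma)}|\nabla u|^q$ is needed for this, since for $\lambda<0$ one has $\Gamma\cap\overline{\Omega_\lambda}=\emptyset$ and hence $u\in C^1(\overline{\Omega_\lambda})$. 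The PQS doubling/Harnack/rescaling argument is instead used (Lemma~\ref{sumweight}) to prove $|\nabla u(x)|\le C\,\operatorname{dist}(x,\Gamma)^{-1}$, from which one deduces that the \emph{inverse} weight $(|\nabla u|+|\nabla u_\lambda|)^{2-p}$ lies in $L^t(\Omega_\lambda^+)$ for some $t>n/2$. That $L^t$ integrability is precisely the hypothesis in Trudinger's weighted Sobolev inequality \cite{Tru}, which is the device replacing the ordinary Sobolev/Poincar\'e step here. The constraint $k\ge n/2$ is calibrated so that the Fermi-coordinate integral $\int_0^\varepsilon r^{-(2-p)t+k-1}\,dr$ converges for some $t$ in the range $(n/2,\,k/(2-p))$; this is the ``calibration'' you anticipate in your last sentence, but it concerns integrability of the weight, not vanishing of the cutoff.

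There is also a gap in your continuation step. You write that the comparison holds ``once $|\Sigma_\mu\setminus N_\delta(\Gamma^\mu)|$ is small enough'', but that set is not small: removing a thin tube around $\Gamma^\mu$ leaves essentially all of $\Sigma_\mu$. What the paper does is first upgrade $u\le u_{\lambda_0}$ to the strict inequality $u<u_{\lambda_0}$ on $\Omega_{\lambda_0}\setminus R_{\lambda_0}(\Gamma)$ via the strong comparison principle together with Lemma~\ref{conncomp} (which rules out the local-symmetry alternative $u\equiv u_{\lambda_0}$ on a connected component of $\Omega_{\lambda_0}\setminus\mathcal Z_{\lambda_0}$), then extract a large compact $K\subset\Omega_{\lambda_0}\setminus(R_{\lambda_0}(\Gamma)\cup A_{\lambda_0})$ on which $u<u_{\lambda_0+\tau}$ persists by uniform continuity, and only then run the weighted estimate on the genuinely small set $\Omega_{\lambda_0+\tau}\setminus K$. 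Without this extraction your smallness mechanism does not engage.
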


\noindent The paper is organized as follows: we prove some technical results in Section \ref{notations} that we will exploit in
Section  \ref{main2proof} to prove  Theorem \ref{main} and Theorem \ref{mainbis}.

\section{Some Technical Results}\label{notations}
\noindent {\bf Notation.} Generic fixed and numerical constants will
be denoted by $C$ (with subscript in some case) and they will be
allowed to vary within a single line or formula. By $|A|$ we will
denote the Lebesgue measure of a measurable set $A$.\\

For a real number
$\lambda$ we set
\begin{equation}\label{eq:sn2}
\Omega_\lambda=\{x\in \Omega:x_1 <\lambda\}
\end{equation}
\begin{equation}\label{eq:sn3}
x_\lambda= R_\lambda(x)=(2\lambda-x_1,x_2,\ldots,x_n)
\end{equation}
which is the reflection through the hyperplane $T_\lambda :=\{x\in \mathbb R^n :  x_1=
\lambda\}$. Also let
\begin{equation}\label{eq:sn4}
a=\inf _{x\in\Omega}x_1.
\end{equation}
Finally we set
\begin{equation}\label{eq:sn33}
u_\lambda(x)=u(x_\lambda)\,.
\end{equation}
We recall also  the definition of $p$-capacity of a compact set $\mathcal
A\subset \mathbb R^n$. For $1\leq p \leq n$ we define
$\operatorname{Cap}_p(\mathcal A)$ as
\begin{equation}\label{eq:cambio1}
\operatorname{Cap}_p(\mathcal A):=
\inf \left\{ \int_{\mathbb{R}^n} |\nabla \varphi
|^p dx < + \infty \; : \; \varphi \in C^{\infty}_c(\mathbb R^n) \ \ \text{and} \ \ \varphi \geq \chi_{\mathcal A}
\right\},
\end{equation} where $\chi_{\mathcal S}$ denotes the characteristic function of a set $\mathcal S$.
By the invariance under  reflections of \eqref{eq:cambio1}, it follows that
\begin{equation}\label{eq:cambio2}
{\operatorname{Cap}_p}(\Gamma)= {\operatorname{Cap}_p}(R_{\lambda}(\Gamma)).
\end{equation}
Moreover it can be shown that, if
${\operatorname{Cap}_p}(R_{\lambda}(\Gamma))=0$, then we have that
\begin{equation}\label{eq:cambio3}
\operatorname{Cap}_p^{D}(R_{\lambda}(\Gamma))=0,\end{equation}
where $D\subset \mathbb{R}^n$ denotes a bounded subset and with $\operatorname{Cap}_p^{D}(\mathcal A)$ ($\mathcal A\subset D$ a compact set of $\mathbb R^n$) we mean
\begin{equation}\nonumber
\operatorname{Cap}^D_p(\mathcal A):=
\inf \left\{ \int_{D} |\nabla \varphi
|^p dx < + \infty \; : \; \varphi \in C^{\infty}_c(D) \ \ \text{and} \ \ \varphi \geq \chi_{\mathcal A}
\right\}.
\end{equation}


Let $\varepsilon >0$ small and let $\mathcal{B}^{\lambda}_{\epsilon}$ be a $\varepsilon$-neighborhood of $R_{\lambda}(\Gamma)$ .
From \eqref{eq:cambio2} and \eqref{eq:cambio3} it follows that there exists $\varphi_{\varepsilon} \in
C^{\infty}_c(\mathcal{B}^{\lambda}_{\epsilon})$ such that
$\varphi_{\varepsilon} \geq 1$ on $\chi_{R_\lambda(\Gamma)}$
and
\begin{equation}\nonumber
\displaystyle \int_{\mathcal{B}^{\lambda}_{\epsilon}} |\nabla
\varphi_{\varepsilon} |^p dx < \varepsilon.
\end{equation}
To carry on our analysis we  need to construct a function
$\psi_{\varepsilon} \in \ W^{1,p}(\Omega)$ such that
$\psi_{\varepsilon} = 1$ in
$\Omega\setminus\mathcal{B}_{\varepsilon}^{\lambda}$,
$\psi_{\varepsilon} = 0$ in a $\delta_{\varepsilon}$-neighborhood
$\mathcal{B}_{\delta_{\varepsilon}}^{\lambda}$  of
$R_\lambda(\Gamma)$ (with $\delta_\varepsilon<\varepsilon$) and such that
\begin{equation}\label{eq:cambio4}
\int_{\mathcal{B}^{\lambda}_{\epsilon}} |\nabla \psi_{\varepsilon}
|^p dx \leq C \varepsilon,\end{equation}
for some positive constant $C$ that does not depend on $\varepsilon$. To construct such a test function we
consider the   real functions $T: \R \rightarrow \R^+_0$ and  $g: \R^+_0 \rightarrow \R^+_0$ defined by
\begin{equation}\label{eq:functionsdefinition}
T(s):=\max \{0;\min\{s;1\}\},\, s\in \mathbb{R}\quad \text{and}\quad g(s):=\max\{0;-2s+1\},\,s\in\mathbb{R}^+_0.
\end{equation}
Finally we set
\begin{equation} \label{test1}
\psi_{\varepsilon}(x):= g(T(\varphi_{\varepsilon}(x))).
\end{equation}
By the definitions \eqref{eq:functionsdefinition}, it follows that
$\psi_{\varepsilon}$ satisfies \eqref{eq:cambio4}.

\noindent To simplify the presentation we summarize the assumptions of the main results as follows:

\noindent {\bf ($A_f^1$)}. {\em For $1<p<2$
we assume that $f$ is locally Lipschitz continuous so that, for any $0\leq t, s\leq M$, there exists a
positive constant $K_f=K_f(M)$ such that
\begin{equation}\nonumber
 |f(s)-f(t)|\leq K_f |s-t|.
\end{equation}
\noindent Moreover $f(s)>0$ for $s>0$ and
$$\lim_{t \rightarrow + \infty} \frac{f(t)}{t^q}=l \in (0,+\infty).$$
 for some $q\in \mathbb R$ such that  $p-1<q<p^*-1$, where $p^*=np/(n-p)$.}

\

\noindent {\bf  ($A_f^2$)}. {\em For $p\geq 2$ we only assume that $f$ is locally Lipschitz continuous so that,
for $0\leq t, s\leq M$ there exists a
positive constant $K_f=K_f(M)$ such that
\begin{equation}\nonumber
 |f(s)-f(t)|\leq K_f |s-t|.
\end{equation}
\noindent Furthermore $f(s)>0$ for $s>0$.}

\noindent {\bf ($A_\Gamma^1$)}. {\em For $1<p<2$ and $n=2$ we assume that $\Gamma = \{0\}$, while for $1<p<2$ and $n>2$ we assume  that $\Gamma\subseteq\mathcal M$ for some compact $C^2$ submanifold
$\mathcal M$ of dimension $m\leq n-k$, with $k \geq \frac{n}{2}$.}

\

\noindent {\bf ($A_\Gamma^2$)}. {\em For $2<p<n$ and $n \geq 2$, we
assume that  $\Gamma$  closed and such that
$${\operatorname{Cap}_p}(\Gamma)=0.$$}
\begin{rem}
We want just to remark that in the case $1 < p < 2$ and $N
> 2$ if $\Gamma\subseteq\mathcal M$ for  some compact $C^2$ submanifold
$\mathcal M$ of dimension $m\leq n-k$ then
${\operatorname{Cap}_p}(\Gamma)=0.$ In this case we consider
$\mathcal{B}_\varepsilon$ a tubular neighborhood of radius
$\varepsilon$ of $\mathcal{M}$, i.e.
$$\mathcal{B}_\varepsilon:=\{x \in \Omega \ : \ \text{dist}(x,\mathcal{M}) < \varepsilon \},$$
with  $\varepsilon > 0$
sufficiently small so that $\mathcal{M}$ has the unique nearest
point property in the neighborhood of $\mathcal{M}$ of radius
$\varepsilon$. We may and do also assume that Fermi coordinates are
well defined in such neighborhood, see e.g. \cite{PPS}. Therefore, using the definition  \eqref{eq:cambio1} above, it can be shown that ${\operatorname{Cap}_p}(\Gamma)=0.$
\end{rem}

Moreover (see for example \cite{lucio}) in the following we further use the following  inequalities: $\forall \eta, \eta' \in  \mathbb{R}^{n}$ with $|\eta|+|\eta'|>0$
there exist positive constants $C_1, C_2,C_3,C_4$ depending on $p$ such that
\begin{equation}\label{eq:inequalities}
\begin{split}
[|\eta|^{p-2}\eta-|\eta'|^{p-2}\eta'][\eta- \eta'] &\geq C_1
(|\eta|+|\eta'|)^{p-2}|\eta-\eta'|^2, \\ \\
\|\eta|^{p-2}\eta-|\eta'|^{p-2}\eta '| & \leq C_2
(|\eta|+|\eta'|)^{p-2}|\eta-\eta '|,\\\\
[|\eta|^{p-2}\eta-|\eta'|^{p-2}\eta '][\eta-\eta '] & \geq C_3
|\eta-\eta '|^p \quad \quad\mbox{if}\quad p\geq 2,\\\\
\|\eta|^{p-2}\eta-|\eta'|^{p-2}\eta '| & \leq C_4 |\eta-\eta
'|^{p-1} \quad\mbox{if}\quad 1 < p \leq 2.
\end{split}
\end{equation}

In the following we will  exploit the fact that
$u_\lambda$ (in the sense of Definition \ref{def:sol}) is a solution
to
\begin{equation}\label{debil2}
\int_{R_\lambda(\Omega)}|\nabla u_\lambda|^{p-2} (\nabla u_\lambda,
\nabla \varphi)\,dx\,=\,\int_{R_\lambda(\Omega)}
f(u_\lambda)\varphi\,dx\qquad\forall \varphi\in
C^{1}_c(R_\lambda(\Omega)\setminus R_\lambda(\Gamma))\,.
\end{equation}
We set
\begin{equation}\label{eq:thankyouuuuuu}
w_\lambda(x)\,:=\,(u-u_\lambda)(x),\quad x\in \overline\Omega_{\lambda} \setminus  R_\lambda(\Gamma).\end{equation}

\begin{lem}\label{leaiuto}
Let  $p>1$ and let $u$ and $u_\lambda$ be solutions to \eqref{debil1} and \eqref{debil2} respectively and let $f: \R \rightarrow \R$ be a
locally Lipschitz continuous function.
Let us assume  $\Gamma\subset \Omega$  closed and such that
$${\operatorname{Cap}_p}(\Gamma)=0.$$
Let $a$ be defined as  in \eqref{eq:sn4}
and
$a<\lambda<0$.

Then
\begin{equation}\nonumber
\int_{\Omega_\lambda}(|\nabla u| + |\nabla u_\lambda|)^{p-2} |\nabla
w_\lambda^+|^2\,dx\leq
C(p,\lambda,\|u\|_{L^\infty(\Omega_\lambda)})\,.
\end{equation}
\end{lem}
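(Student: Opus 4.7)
The plan is to use $\varphi_\varepsilon := w_\lambda^+\,\psi_\varepsilon^p$ as a test function in both weak formulations \eqref{debil1} and \eqref{debil2}, with $\psi_\varepsilon$ the cutoff constructed in \eqref{test1} around $R_\lambda(\Gamma)$ (multiplied, if necessary, by an analogous cutoff around $\Gamma\cap\Omega_\lambda$). I would extend $w_\lambda^+$ by zero outside $\Omega_\lambda$; this is admissible because $w_\lambda=0$ on $T_\lambda$. Together with the vanishing of $\psi_\varepsilon$ near the singular sets, this places $\varphi_\varepsilon$ in $W^{1,p}_0$ with support away from $\Gamma\cup R_\lambda(\Gamma)$, so a standard density argument justifies its use in both equations.

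After subtracting the two formulations and expanding
$\nabla(w_\lambda^+\psi_\varepsilon^p)=\psi_\varepsilon^p\nabla w_\lambda^+ + p\,w_\lambda^+\psi_\varepsilon^{p-1}\nabla\psi_\varepsilon$, the first inequality of \eqref{eq:inequalities} yields
\begin{equation*}
C_1\int_{\Omega_\lambda}(|\nabla u|+|\nabla u_\lambda|)^{p-2}|\nabla w_\lambda^+|^2\,\psi_\varepsilon^p\,dx \;\le\; \mathcal R_\varepsilon+|\mathcal E_\varepsilon|,
\end{equation*}
with $\mathcal R_\varepsilon:=\int_{\Omega_\lambda}(f(u)-f(u_\lambda))\,w_\lambda^+\psi_\varepsilon^p\,dx$ and $\mathcal E_\varepsilon$ the cross term involving $\nabla\psi_\varepsilon$. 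On $\{w_\lambda^+>0\}$ one has $0<u_\lambda<u\le\|u\|_{L^\infty(\Omega_\lambda)}$ by positivity of $u_\lambda$, hence $\|w_\lambda^+\|_\infty\le\|u\|_{L^\infty(\Omega_\lambda)}$, and the local Lipschitz bound on $f$ gives $\mathcal R_\varepsilon\le K_f\int_{\Omega_\lambda}(w_\lambda^+)^2\,dx\le C(p,\lambda,\|u\|_{L^\infty(\Omega_\lambda)})$.

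For $\mathcal E_\varepsilon$, the second inequality of \eqref{eq:inequalities} combined with $|\nabla u-\nabla u_\lambda|\le|\nabla u|+|\nabla u_\lambda|$ and H\"older's inequality with conjugate exponents $p$ and $p/(p-1)$ yields
\begin{equation*}
|\mathcal E_\varepsilon|\le C\,\|w_\lambda^+\|_\infty\Bigl(\int_{\Omega_\lambda}(|\nabla u|+|\nabla u_\lambda|)^p\,\psi_\varepsilon^p\,dx\Bigr)^{(p-1)/p}\Bigl(\int_{\Omega_\lambda}|\nabla\psi_\varepsilon|^p\,dx\Bigr)^{1/p},
\end{equation*}
which is $O(\varepsilon^{1/p})$ thanks to \eqref{eq:cambio4}, provided $|\nabla u|+|\nabla u_\lambda|\in L^p$ on the support of $\psi_\varepsilon$. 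A final application of Fatou's lemma, noting that $\psi_\varepsilon\nearrow 1$ off a null set, then yields the stated bound. The main obstacle I foresee is precisely this $L^p$--integrability of the gradients in the region approaching $R_\lambda(\Gamma)$, which is not directly stated in the hypotheses; it is obtained as a preliminary step by testing \eqref{debil1} against $u\,\eta_\varepsilon^p$ with $\eta_\varepsilon$ a cutoff analogous to $\psi_\varepsilon$, absorbing the cross term via Young's inequality, and passing to the limit to produce $\int|\nabla u|^p<\infty$, with the corresponding bound for $u_\lambda$ following by reflection.
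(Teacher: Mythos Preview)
There is a genuine gap in your argument. Your estimate for $\mathcal E_\varepsilon$ requires a bound, \emph{uniform in $\varepsilon$}, on
\[
\int_{\Omega_\lambda}(|\nabla u|+|\nabla u_\lambda|)^p\,\psi_\varepsilon^p\,dx,
\]
hence in particular $|\nabla u_\lambda|\in L^p(\Omega_\lambda)$; by reflection this amounts to $|\nabla u|\in L^p$ on a region containing $\Gamma$. But the hypotheses only give $u\in C^1(\overline\Omega\setminus\Gamma)$: near $\Gamma$ both $u$ and $|\nabla u|$ may blow up. Your proposed Caccioppoli step---testing \eqref{debil1} against $u\,\eta_\varepsilon^p$ and absorbing via Young---produces a term $C_\delta\int|\nabla\eta_\varepsilon|^p\,u^p$ whose integrand lives on an $\varepsilon$-shell around $\Gamma$, where $u$ is \emph{not} known to be bounded; likewise the right-hand side $\int f(u)\,u\,\eta_\varepsilon^p$ is uncontrolled as $\varepsilon\to 0$. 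So this preliminary step does not go through, and without it your bound on $\mathcal E_\varepsilon$ collapses.

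The paper's proof is designed precisely to avoid any global $L^p$ control of $\nabla u_\lambda$. It splits $\Omega_\lambda=A_\lambda\cup B_\lambda$ according to whether $|\nabla u_\lambda|<\dot C\,|\nabla u|$ or not. On $A_\lambda$ one has $|\nabla u|+|\nabla u_\lambda|\le \hat C\,|\nabla u|$, so the cross term is estimated purely in terms of $\int_{\Omega_\lambda}|\nabla u|^p$, which is finite since $\Gamma\cap\overline{\Omega_\lambda}=\emptyset$ for $\lambda<0$. On $B_\lambda$ one has the comparability $|\nabla u|+|\nabla u_\lambda|\sim|\nabla u_\lambda|\sim|\nabla w_\lambda^+|$, and a weighted Young inequality converts the cross term into $\delta C$ times the left-hand side (absorbed for small $\delta$) plus $\dfrac{C}{\delta}\int|\nabla\psi_\varepsilon|^p\,(w_\lambda^+)^p$, which is $O(\varepsilon)$ because $w_\lambda^+\le\|u\|_{L^\infty(\Omega_\lambda)}$. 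This decomposition, not a global Caccioppoli estimate, is the missing ingredient.
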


\begin{proof}
In all the proof, according to our assumptions, we assume that
  $0\leq t, s\leq M$, there exists a
positive constant $K_f=K_f(M)$ such that
\begin{equation}\nonumber
 |f(s)-f(t)|\leq K_f |s-t|.
\end{equation}
For $\psi_\varepsilon$ defined as in \eqref{test1}, we consider
$$\varphi_\varepsilon := w_\lambda^+ \psi_\varepsilon^p \chi_{\Omega_\lambda}.$$
By standard arguments, since $w_\lambda^+\leq \|u\|_{L^\infty(\Omega_\lambda)}$ (recall that in particular  $u\in C(\overline{\Omega}\setminus \Gamma)$) and by construction $0\leq \psi_\varepsilon\leq 1$, we have that $\varphi_\varepsilon \in
W^{1,p}_0(\Omega_\lambda)$. By a density argument we use  $\varphi_\varepsilon$ as test function
 in \eqref{debil1} and \eqref{debil2}.
Subtracting we get
\begin{equation} \label{starteq}
\begin{split} &\int_{\Omega_\lambda} (|\nabla u|^{p-2} \nabla u
- |\nabla u_\lambda|^{p-2}\nabla u_\lambda,
 \nabla w^+_\lambda) \psi_\varepsilon^p \, dx\\
&+ p \int_{\Omega_\lambda} (|\nabla u|^{p-2} \nabla u - |\nabla u_\lambda|^{p-2} \nabla u_\lambda, \nabla \psi_\varepsilon)
\psi_\varepsilon^{p-1} w_\lambda^+ \, dx \\
&= \int_{\Omega_\lambda} (f(u)-f(u_\lambda)) w_\lambda^+
\psi_\varepsilon^p \, dx
\end{split}
\end{equation}
Now it is useful to split the set $\Omega_\lambda$ as the union of
two disjoint subsets $A_\lambda$ and $B_\lambda$ such that $\Omega_\lambda= A_\lambda \cup B_\lambda$. In particular, for $ \dot C>1$ that will be fixed large,   we set
\begin{equation}\nonumber
A_\lambda = \{ x \in \Omega_\lambda \ :  \ | \nabla u_\lambda (x)| <  \dot C| \nabla u (x)|  \}\quad \text{and}\quad
B_\lambda = \{ x \in \Omega_\lambda \ :  \ | \nabla u_\lambda (x)| \geq  \dot C| \nabla u (x)|\}.
\end{equation}
Then it follows that
\begin{itemize}
\item[-] By  the definition of $ A_\lambda$ it follows that there exists $\hat C$ such that
\begin{equation}\label{eq:montalbano}
 |\nabla u| + |\nabla u_\lambda| <\hat C|\nabla u|.\end{equation}
\item[-] By the definition  of the  set $B_\lambda$ and standard triangular
inequalities, we can deduce the existence of a positive constant $\check C$ such that
\begin{equation} \label{auxiliar}
\frac{1}{\check C} |\nabla u_\lambda| \leq |\nabla
u_\lambda| - |\nabla u| \leq |\nabla w_\lambda| \leq |\nabla
u_\lambda| + |\nabla u| \leq \check C |\nabla
u_\lambda|.
\end{equation}
\end{itemize}

We distinguish two cases:

\

\noindent {\bf {Case 1: $1<p<2$.}} From \eqref{starteq}, using
\eqref{eq:inequalities} and {\bf  ($A_f^1$)} we have
\begin{equation}\label{eq:cambio5}
\begin{split}
&C_1 \int_{\Omega_\lambda} (|\nabla u| + |\nabla u_\lambda|)^{p-2}
|\nabla w_\lambda^+|^2 \psi_\varepsilon^p \, dx  \leq
\int_{\Omega_\lambda} (|\nabla u|^{p-2} \nabla u - |\nabla
u_\lambda|^{p-2}\nabla u_\lambda, \nabla w^+_\lambda) \psi_\varepsilon^p \, dx\\
&\leq p \int_{\Omega_\lambda} \left\|\nabla u|^{p-2} \nabla u -
|\nabla u_\lambda|^{p-2} \nabla u_\lambda \right| \ |\nabla
\psi_\varepsilon| \psi_\varepsilon^{p-1} w_\lambda^+ \, dx+
\int_{\Omega_\lambda} \frac{f(u)-f(u_\lambda)}{u-u_\lambda} (w_\lambda^+)^2 \psi_\varepsilon^p \, dx\\
& \leq p C_4 \int_{\Omega_\lambda} |\nabla w^+_\lambda|^{p-1} \
|\nabla \psi_\varepsilon| \psi_\varepsilon^{p-1} w_\lambda^+ \,
dx+K_f\int_{\Omega_\lambda} (w_\lambda^+)^2 \psi_\varepsilon^p \, dx
\\
&\leq C\Big (I_1+I_2\Big ) + C\int_{\Omega_\lambda}
\psi_\varepsilon^p \, dx,
\end{split}
\end{equation}
where
$$I_1:= \int_{A_\lambda}  |\nabla w^+_\lambda|^{p-1} \ |\nabla
\psi_\varepsilon| \psi_\varepsilon^{p-1} w_\lambda^+ \, dx,$$
$$I_2:=\int_{B_\lambda} |\nabla w^+_\lambda|^{p-1} \ |\nabla
\psi_\varepsilon| \psi_\varepsilon^{p-1} w_\lambda^+ \, dx,$$ and
$C=C(p, \lambda ,\|u\|_{L^{\infty}(\Omega_\lambda)})$ is a positive
constant.

\emph{Step 1: Evaluation of $I_1$.} Using  Young's inequality and
\eqref{eq:montalbano}, we have
\begin{equation}\label{eq:asjdhjaskhdjbjhbhjwefohi}
\begin{split}
{I}_1 &= \int_{A_\lambda} |\nabla w^+_\lambda|^{p-1} |\nabla
\psi_\varepsilon| \psi_\varepsilon^{p-1} w_\lambda^+ \,  dx
\leq
\left(\int_{A_\lambda} |\nabla w_\lambda^+|^p \psi_\varepsilon^p
\, dx \right)^{\frac{p-1}{p}}
  \left(\int_{A_\lambda} |\nabla \psi_\varepsilon|^p
(w_\lambda^+)^p \, dx\right)^{\frac{1}{p}}\\& \leq
\left(\int_{A_\lambda} (|\nabla u|+|\nabla
u_\lambda|)^p \psi_\varepsilon^p \, dx \right)^{\frac{p-1}{p}} \left( \int_{A_\lambda} |\nabla
\psi_\varepsilon|^p (w_\lambda^+)^p \, dx \right)^{\frac{1}{p}}\\& \leq \left(\hat C
\int_{A_\lambda}
|\nabla u|^p \psi_\varepsilon^p \, dx  \right)^{\frac{p-1}{p}}\left( \int_{A_\lambda} |\nabla \psi_\varepsilon|^p (w_\lambda^+)^p \, dx \right)^{\frac{1}{p}}\\
&\leq C \left(\int_{\Omega_\lambda}|\nabla u|^p\,dx \right)^{\frac{p-1}{p}}\left(
 \int_{\Omega_\lambda} |\nabla \psi_\varepsilon|^p \, dx \right)^{\frac{1}{p}},
\end{split}
\end{equation}
where $C=C(p,\lambda, \|u\|_{L^{\infty}(\Omega_\lambda)})$ is   a positive
constant.

\emph{Step 2: Evaluation of $I_2$.} Using the weighted Young's inequality
and \eqref{auxiliar} we get
\begin{equation}\label{eq:asdadajdhjaskhdjbjhbhjwefohi}
\begin{split}
&{I}_2= \int_{B_\lambda} |\nabla w^+_\lambda|^{p-1} |\nabla
\psi_\varepsilon|  \psi_\varepsilon^{p-1} w_\lambda^+ \, dx
\leq  \delta \int_{B_\lambda} |\nabla w_\lambda^+|^p
\psi_\varepsilon^p \, dx + \frac{1}{\delta} \int_{B_\lambda}
|\nabla\psi_\varepsilon|^p (w_\lambda^+)^p \, dx
\\ & \leq  \delta\int_{B_\lambda} (|\nabla u| +
|\nabla u_\lambda|)^{p-2} \left(|\nabla u| + |\nabla
u_\lambda|\right)^2 \psi_\varepsilon^p \, dx + \frac{1}{\delta}
\int_{B_\lambda} |\nabla\psi_\varepsilon|^p (w_\lambda^+)^p \,
dx\\& \leq  \delta\check C^2  \int_{B_\lambda} (|\nabla u| + |\nabla
u_\lambda|)^{p-2} |\nabla u_\lambda|^2 \psi_\varepsilon^p \, dx +
\frac{1}{\delta} \int_{B_\lambda} |\nabla\psi_\varepsilon|^p
(w_\lambda^+)^p \, dx\\& \leq  \delta\check C^4 \int_{B_\lambda}
(|\nabla u| + |\nabla u_\lambda|)^{p-2} |\nabla w_\lambda^+|^2
\psi_\varepsilon^p \, dx + \frac{1}{\delta} \int_{B_\lambda}
|\nabla \psi_\varepsilon|^p (w_\lambda^+)^p \, dx\\ & \leq \delta
C \int_{\Omega_\lambda} (|\nabla u| + |\nabla u_\lambda|)^{p-2}
|\nabla w^+_\lambda|^2 \psi_\varepsilon^p \, dx +
\frac{C}{\delta} \int_{\Omega_\lambda} |\nabla
\psi_\varepsilon|^p \, dx,
\end{split}
\end{equation}
where $C=C(p,\lambda, \|u\|_{L^{\infty}(\Omega_\lambda)})$ is   a positive
constant.  Finally, using \eqref{eq:cambio5},
\eqref{eq:asjdhjaskhdjbjhbhjwefohi} and
\eqref{eq:asdadajdhjaskhdjbjhbhjwefohi}, we obtain
\begin{equation}\label{eq:cambio6}
\begin{split}
& \int_{\Omega_\lambda} (|\nabla u| + |\nabla u_\lambda|)^{p-2}
|\nabla w_\lambda^+|^2 \psi_\varepsilon^p \, dx  \\& \leq \delta C
\int_{\Omega_\lambda} (|\nabla u| + |\nabla u_\lambda|)^{p-2}
|\nabla w_\lambda^+|^2\psi_\varepsilon^p \, dx  +C
\left(\int_{\Omega_\lambda}|\nabla u|^p\,dx \right)^{\frac{p-1}{p}}\left(
 \int_{\Omega_\lambda} |\nabla \psi_\varepsilon|^p \, dx \right)^{\frac{1}{p}}\\
 &+\frac{C}{ \delta}
\int_{\Omega_\lambda} |\nabla \psi_\varepsilon|^p \, dx
+ C\int_{\Omega_\lambda} \psi_\varepsilon^p \, dx,
\end{split}
\end{equation}
for some positive constant $C=C(p,
\lambda,\|u\|_{L^{\infty}(\Omega_\lambda)})$.

\

\noindent {\bf {Case 2: $p\geq 2$.}} From \eqref{starteq}, using
\eqref{eq:inequalities} and {\bf  ($A_f^2$)} we have
\begin{equation}\label{eq:estp2}
\begin{split}
&C_1 \int_{\Omega_\lambda} (|\nabla u| + |\nabla u_\lambda|)^{p-2}
|\nabla w_\lambda^+|^2 \psi_\varepsilon^p \, dx  \leq
\int_{\Omega_\lambda} (|\nabla u|^{p-2} \nabla u - |\nabla u_\lambda|^{p-2}\nabla u_\lambda, \nabla w^+_\lambda) \psi_\varepsilon^p \, dx\\
&=-p\int_{\Omega_\lambda} (|\nabla u|^{p-2} \nabla u - |\nabla
u_\lambda|^{p-2} \nabla u_\lambda, \nabla \psi_\varepsilon)
\psi_\varepsilon^{p-1} w_\lambda^+ \, dx + \int_{\Omega_\lambda}
(f(u)-f(u_\lambda))
w_\lambda^+ \psi_\varepsilon^p \, dx\\
&\leq p \int_{\Omega_\lambda} \left\|\nabla u|^{p-2} \nabla u -
|\nabla u_\lambda|^{p-2} \nabla u_\lambda \right| \ |\nabla
\psi_\varepsilon| \psi_\varepsilon^{p-1} w_\lambda^+ \, dx+
\int_{\Omega_\lambda}
\frac{f(u)-f(u_\lambda)}{u-u_\lambda} (w_\lambda^+)^2 \psi_\varepsilon^p \, dx\\
& \leq p C_2 \int_{\Omega_\lambda} (|\nabla u| + |\nabla
u_\lambda|)^{p-2} |\nabla w^+_\lambda| \ |\nabla \psi_\varepsilon|
\psi_\varepsilon^{p-1} w_\lambda^+ \, dx+ K_f \int_{\Omega_\lambda}
(w_\lambda^+)^2 \psi_\varepsilon^p \, dx
\\
&=p C_2 \int_{A_\lambda} (|\nabla u| + |\nabla
u_\lambda|)^{p-2} |\nabla w^+_\lambda| \ |\nabla
\psi_\varepsilon| \psi_\varepsilon^{p-1} w_\lambda^+ \,
dx\\
&+p C_2 \int_{B_\lambda} (|\nabla u| + |\nabla u_\lambda|)^{p-2}
|\nabla w^+_\lambda| \ |\nabla \psi_\varepsilon|
\psi_\varepsilon^{p-1} w_\lambda^+ \, dx+ K_f \int_{\Omega_\lambda}
(w_\lambda^+)^2
\psi_\varepsilon^p \, dx\\
&\leq C\Big (I_1+I_2\Big ) + C\int_{\Omega_\lambda}
\psi_\varepsilon^p \, dx,
\end{split}
\end{equation}
where
$$I_1:= \int_{A_\lambda} (|\nabla u| + |\nabla
u_\lambda|)^{p-2} |\nabla w^+_\lambda| \ |\nabla
\psi_\varepsilon| \psi_\varepsilon^{p-1} w_\lambda^+ \,
dx,$$
$$I_2:=\int_{B_\lambda} (|\nabla u| + |\nabla
u_\lambda|)^{p-2} |\nabla w^+_\lambda| \ |\nabla
\psi_\varepsilon| \psi_\varepsilon^{p-1} w_\lambda^+ \,
dx,$$
and $C=C(p, \lambda ,\|u\|_{L^{\infty}(\Omega_\lambda)})$ is a positive constant.

\emph{Step 1: Evaluation of $I_1$.} Using the weighted Young's inequality we have
\[ \nonumber
\begin{split} I_1&= \int_{A_\lambda} (|\nabla u| + |\nabla
u_\lambda|)^{p-2} |\nabla w^+_\lambda| \ |\nabla
\psi_\varepsilon| \psi_\varepsilon^{p-1} w_\lambda^+ \,
dx\\
&\leq \delta \int_{A_\lambda} (|\nabla u| + |\nabla
u_\lambda|)^{p-2} |\nabla w_\lambda^+|^2 \psi_\varepsilon^p
\, dx+ \frac{1}{\delta} \int_{A_\lambda} (|\nabla u| + |\nabla
u_\lambda|)^{p-2} |\nabla \psi_\varepsilon|^2
\psi_\varepsilon^{p-2} (w_\lambda^+)^2 \, dx.
\end{split}
\]
Using \eqref{eq:montalbano} and  H\"older inequality, we obtain
\begin{equation}\label{eq:cambio7}
\begin{split}
I_1 &
\leq \delta \int_{A_\lambda} (|\nabla u| + |\nabla u_\lambda|)^{p-2}
|\nabla w_\lambda^+|^2 \psi_\varepsilon^p \, dx +
\frac{\hat C^{p-2}}{\delta}\int_{A_\lambda} |\nabla u|^{p-2}
 |\nabla \psi_\varepsilon|^2 \psi_\varepsilon^{p-2}
(w_\lambda^+)^2 \, dx\\
&\leq \delta \int_{A_\lambda} (|\nabla u| + |\nabla
u_\lambda|)^{p-2} |\nabla w_\lambda^+|^2 \psi_\varepsilon^p
\, dx  +  \frac{C}{\delta} \left(\int_{A_\lambda} |\nabla u|^p \psi_\varepsilon^p
\, dx \right)^{\frac{p-2}{p}} \left(\int_{A_\lambda} |\nabla
\psi_\varepsilon|^p (w_\lambda^+)^p \, dx
\right)^{\frac{2}{p}}
\\
&\leq \delta \int_{\Omega_\lambda} (|\nabla u| + |\nabla
u_\lambda|)^{p-2} |\nabla w_\lambda^+|^2 \psi_\varepsilon^p
\, dx  +  \frac{C}{\delta} \left(\int_{\Omega_\lambda} |\nabla u|^p
\, dx \right)^{\frac{p-2}{p}} \left(\int_{\Omega_\lambda} |\nabla
\psi_\varepsilon|^p \, dx
\right)^{\frac{2}{p}},
\end{split}
\end{equation}
with $C=C(p,\lambda, \|u\|_{L^{\infty}(\Omega_\lambda)})$ is   a positive constant.

\emph{Step 2: Evaluation of $I_2$}. By the weighted Young's inequality
\begin{equation} \nonumber
\begin{split}
I_2&:=\int_{B_\lambda} (|\nabla u| + |\nabla u_\lambda|)^{p-2}
|\nabla w^+_\lambda| \ |\nabla \psi_\varepsilon|
\psi_\varepsilon^{p-1} w_\lambda^+ \, dx\\
& \leq \delta \int_{B_\lambda} (|\nabla u| + |\nabla
u_\lambda|)^{\frac{p(p-2)}{p-1}} |\nabla
w_\lambda^+|^{\frac{p}{p-1}} \psi_\varepsilon^p
\, dx+ \frac{1}{\delta} \int_{B_\lambda} |\nabla \psi_\varepsilon|^p (w_\lambda^+)^p \, dx\\
&= \delta \int_{B_\lambda} (|\nabla u| + |\nabla
u_\lambda|)^{\frac{p(p-2)}{p-1}} |\nabla w_\lambda^+|^2 |\nabla
w_\lambda^+|^{\frac{p}{p-1}-2} \psi_\varepsilon^p
\, dx+ \frac{1}{\delta} \int_{B_\lambda} |\nabla \psi_\varepsilon|^p
(w_\lambda^+)^p \, dx.
\end{split}
\end{equation}
Using \eqref{auxiliar} and noticing that
\[\frac{p}{(p-1)}-2 < 0,\] we obtain the following
estimate
\begin{equation}\label{eq:gunssssss&rosesssss}
\begin{split}
I_2 &\leq \delta \check C^{\frac{(p-2)(p+1)}{p-1}}\int_{B_\lambda} |\nabla
u_\lambda|^{p-2} |\nabla w_\lambda^+|^2 \psi_\varepsilon^p
\, dx+ \frac{1}{\delta} \int_{B_\lambda} |\nabla \psi_\varepsilon|^p (w_\lambda^+)^p \, dx
\\
& \leq \delta{C} \int_{B_\lambda} (|\nabla u| + |\nabla
u_\lambda|)^{p-2} |\nabla w_\lambda^+|^2 \psi_\varepsilon^p
\, dx + \frac{C}{\delta}
\int_{B_\lambda} |\nabla \psi_\varepsilon|^p \, dx\\
& \leq \delta{C} \int_{\Omega_\lambda} (|\nabla u| + |\nabla
u_\lambda|)^{p-2} |\nabla w_\lambda^+|^2 \psi_\varepsilon^p
\, dx + \frac{C}{\delta}
\int_{\Omega_\lambda} |\nabla \psi_\varepsilon|^p \, dx,
\end{split}
\end{equation}
with $C=C(p, \|u\|_{L^{\infty}(\Omega_\lambda)})$. In the second line of \eqref{eq:gunssssss&rosesssss} we exploited the fact that, since $p\geq 2$ then \[|\nabla
u_\lambda|^{p-2}\leq (|\nabla u| + |\nabla
u_\lambda|)^{p-2}. \]
Collecting
\eqref{eq:estp2}, \eqref{eq:cambio7} and
\eqref{eq:gunssssss&rosesssss}  we
deduce that
\begin{equation}\label{eq:cambio8}
\begin{split}
& \int_{\Omega_\lambda} (|\nabla u| + |\nabla u_\lambda|)^{p-2}
|\nabla w_\lambda^+|^2 \psi_\varepsilon^p \, dx\\
 &\leq \delta C \int_{\Omega_\lambda} (|\nabla u| + |\nabla
u_\lambda|)^{p-2} |\nabla w_\lambda^+|^2 \psi_\varepsilon^p
\, dx  +  \frac{C}{\delta} \left(\int_{\Omega_\lambda} |\nabla u|^p
\, dx \right)^{\frac{p-2}{p}} \left(\int_{\Omega_\lambda} |\nabla
\psi_\varepsilon|^p \, dx
\right)^{\frac{2}{p}}\\
& + \frac{C}{\delta}
\int_{\Omega_\lambda} |\nabla \psi_\varepsilon|^p \, dx+C\int_{\Omega_\lambda}
\psi_\varepsilon^p \, dx,
\end{split}
\end{equation}
for some positive constant $C=C(p, \lambda,\|u\|_{L^{\infty}(\Omega_\lambda)})$.

\

For $\delta$ small, from \eqref{eq:cambio6} and \eqref{eq:cambio8}, using \eqref{eq:cambio4} and the fact that  for $\lambda<0$ the solution $u\in W^{1,p}(\Omega_\lambda)$,  letting $\varepsilon \rightarrow 0$ by Fatou's Lemma  we obtain
$$\int_{\Omega_\lambda} (|\nabla u| + |\nabla u_\lambda|)^{p-2} |\nabla w_\lambda^+|^2 \, dx
\leq C(p,\lambda,\|u\|_{L^\infty(\Omega_\lambda)}),$$
concluding the proof.
\end{proof}
\section{Proof of Theorem \ref{main} and Theorem \ref{mainbis}}\label{main2proof}
\noindent We recall the fact that $u_\lambda$ (in the sense of
Definition \ref{def:sol}) is a solution to
\begin{equation}\label{debil150}
\int_{R_\lambda(\Omega)}|\nabla u_\lambda|^{p-2} (\nabla u_\lambda,
\nabla \varphi)\,dx\,=\,\int_{R_\lambda(\Omega)}
f(u_\lambda)\varphi\,dx\qquad\forall \varphi\in
C^{1}_c(R_\lambda(\Omega)\setminus R_\lambda(\Gamma))\,.
\end{equation}
We set
\[
w_\lambda(x)\,:=\,(u-u_\lambda)(x),\quad x\in \overline\Omega\setminus (\Gamma\cup R_\lambda(\Gamma)).\]

\noindent Since in the following we will exploit weighted Sobolev inequalities, it is convenient to set
weight
\begin{equation}\label{weightt}
\hat \varrho:=|\nabla u|^{p-2}\qquad
\frac{1}{\hat \varrho}:=|\nabla u|^{2-p}\,.
\end{equation}
We have the following
\begin{lem}\label{sumweight}
Let $1<p<2$. Under the same assumption of Theorem \ref{mainbis}, define
\[\Omega_\lambda^+:=\Omega_\lambda \cap \text{supp}\,(w_\lambda^+).\]
Then
\begin{equation}\label{eq:sumweight}
|\nabla u |^{2-p} \in L^t(R_\lambda(\Omega_\lambda^+)),
\end{equation}
for some $t > \frac{n}{2}$.
\end{lem}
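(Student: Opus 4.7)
The plan is to localize the integration and treat separately the portion far from $\Gamma$, where $|\nabla u|$ is bounded, from the portion near $\Gamma$, where $|\nabla u|$ may blow up; the estimates near $\Gamma$ are extracted from the a priori bounds of \cite{PQS} combined with a blow-up argument.

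Since $\Omega_\lambda^+\subset\Omega_\lambda\subset\{x_1<\lambda\}$ while $\Gamma\subset\{x_1=0\}$, the function $u$ is $C^1$ on a neighborhood of $\overline{\Omega_\lambda^+}$ itself. The reflected set $R_\lambda(\Omega_\lambda^+)\subset\{x_1>\lambda\}$, on the other hand, may approach $\mathcal M\supset\Gamma$. Using the change of variable $y=R_\lambda(x)$, which is an isometry, the claim is equivalent to $|\nabla u_\lambda|^{2-p}\in L^t(\Omega_\lambda^+)$, so that the only potential obstruction is the behaviour of $\nabla u_\lambda$ near $R_\lambda(\Gamma)\subset\{x_1=2\lambda\}$. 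Equivalently, working with $u$ in $R_\lambda(\Omega_\lambda^+)$, one needs to control $|\nabla u|$ near $\Gamma$. Fix a small tubular neighborhood $\mathcal N_{\delta_0}$ of $\mathcal M$ via the Fermi coordinates from the remark after $(A_\Gamma^2)$; outside $\mathcal N_{\delta_0}$ the bound $|\nabla u|\leq C$ holds by $C^1$ regularity and the corresponding contribution to $\int |\nabla u|^{(2-p)t}\,dx$ is finite for every $t$.

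Inside $\mathcal N_{\delta_0}$ I would apply the a priori estimates of \cite{PQS} (where the subcritical growth hypothesis $(A_f^1)$ is essential) to deduce a pointwise bound of the form
$$u(x)\leq C\,\dist(x,\Gamma)^{-p/(q-p+1)}$$
for $x$ close to $\Gamma$, and then upgrade this to the matching gradient bound
$$|\nabla u(x)|\leq C\,\dist(x,\Gamma)^{-p/(q-p+1)-1}$$
via a blow-up argument: for $x_0$ close to $\Gamma$ set $r:=\tfrac{1}{2}\dist(x_0,\Gamma)$ and $v(y):=r^{p/(q-p+1)}u(x_0+ry)$; then $v$ solves a quasilinear equation on the unit ball with uniformly bounded $L^\infty$-norm and uniformly bounded right-hand side, so that the $C^{1,\alpha}$ regularity theory for $-\Delta_p$ produces $|\nabla v(0)|\leq C$ independently of $x_0$, which rescales to the claimed bound. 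The codimension assumption $k\geq n/2$ on $\mathcal M$ and the Liouville-type results in the subcritical regime $q<p^*-1$ rule out nontrivial blow-up profiles and guarantee the uniformity of the constant.

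Finally, I would pass to Fermi coordinates adapted to $\mathcal M$ of codimension $k$ and reduce the integral to a radial one,
$$\int_{R_\lambda(\Omega_\lambda^+)\cap\mathcal N_{\delta_0}}|\nabla u|^{(2-p)t}\,dx\leq C\int_0^{\delta_0}r^{-\beta t}\,r^{k-1}\,dr,$$
where $\beta:=(2-p)(q+1)/(q-p+1)>0$. The integral converges precisely when $t<k/\beta$, and an elementary verification based on $k\geq n/2$ together with the strict subcritical inequality $q<p^*-1$ gives $k/\beta>n/2$, so that an admissible $t\in(n/2,\,k/\beta)$ can be selected, proving the lemma. The case $n=2$ with $\Gamma=\{0\}$ is treated identically, replacing Fermi coordinates by spherical coordinates centered at the origin. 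The main obstacle in this program is the second step: the rigorous passage from the PQS $L^\infty$ estimate on $u$ to the sharp pointwise gradient bound, since the blow-up analysis must simultaneously accommodate the degeneracy of $-\Delta_p$ for $p<2$ and extract a single universal exponent that leaves a nontrivial window $(n/2,k/\beta)$ for $t$; once the gradient bound is in hand the remainder of the argument is a straightforward integration in Fermi coordinates.
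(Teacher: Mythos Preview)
Your overall strategy---PQS bound, blow-up to a gradient estimate, then integration in Fermi coordinates---is the same as the paper's, but the gradient estimate you extract is too weak and the final ``elementary verification'' that $k/\beta>n/2$ is false in general.

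The decisive point you miss is that $u$ is \emph{bounded} on $R_\lambda(\Omega_\lambda^+)$. Indeed, by the very definition of $\Omega_\lambda^+=\Omega_\lambda\cap\operatorname{supp}(w_\lambda^+)$ one has $u_\lambda\leq u$ on $\Omega_\lambda^+$, and since $\lambda<0$ keeps $\overline{\Omega_\lambda}$ away from $\Gamma$, $u$ is bounded there; hence
\[
\|u\|_{L^\infty(R_\lambda(\Omega_\lambda^+))}=\|u_\lambda\|_{L^\infty(\Omega_\lambda^+)}\leq \|u\|_{L^\infty(\Omega_\lambda)}<\infty.
\]
With this in hand the paper rescales by $g(x)=u(dx+x_0)$ (no rescaling of the values), uses PQS only to bound the coefficient $c(x)=d^p f(u)/u^{p-1}$, and then the Harnack inequality together with $g(0)=u(x_0)\leq C$ forces $g\leq C$ on a fixed ball. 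The $C^{1,\alpha}$ estimates then give $|\nabla u|\leq C/d$, i.e.\ the exponent is exactly $1$, so the Fermi integral is $\int_0^\varepsilon r^{-(2-p)t+k-1}\,dr$, convergent for $t<k/(2-p)$; since $2-p<1$ and $k\geq n/2$ this window always contains $(n/2,k/(2-p))$.

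Your rescaling $v(y)=r^{p/(q-p+1)}u(x_0+ry)$ throws away the boundedness information and yields only $|\nabla u|\leq C\,d^{-p/(q-p+1)-1}$, whence $\beta=(2-p)(q+1)/(q-p+1)$. This $\beta$ is a decreasing function of $q$ that blows up as $q\downarrow p-1$, so the subcritical upper bound $q<p^*-1$ is of no help. Concretely, with $k=n/2$ one needs $\beta<1$, i.e.\ $q(p-1)>1$; for $p=3/2$, $n=4$, $k=2$ and $q=0.6\in(p-1,p^*-1)$ you get $\beta=8$ and $k/\beta=1/4<2=n/2$, so the interval $(n/2,k/\beta)$ is empty and the argument collapses. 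The fix is precisely the bounded-on-$R_\lambda(\Omega_\lambda^+)$ observation above, which replaces your exponent $p/(q-p+1)+1$ by $1$.
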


\begin{proof}
By definition of $\Omega_\lambda^+$ we have
$$\|u\|_{L^\infty(R_\lambda(\Omega_\lambda^+))} =
\|u_\lambda\|_{L^\infty(\Omega_\lambda^+)} \leq
\|u\|_{L^\infty(\Omega_\lambda^+)} \leq C(\lambda, \|u\|_{L^\infty(\Omega_\lambda)}).$$
Taking $x_0 \in R_\lambda(\Omega_\lambda^+)\setminus \Gamma$, we
set:
\begin{equation}\label{eq:scaling} g(x):=u(dx+x_0) \quad \text{in} \
B_{\frac{1}{2}}(0),
\end{equation}
where $d:=\text{dist}(x_0,\Gamma)$. Since $u$ is a solution (in the
sense of Definition \ref{def:sol}) to \eqref{problem}, we deduce
that for any $\varphi \in C^1_c(B_{1/2}(0))$
\begin{eqnarray}\label{eq:fantozzi}\\\nonumber
&&\int_{B_{\frac{1}{2}}(0)}|\nabla g|^{p-2}(\nabla g, \nabla \varphi)\, dx\\\nonumber &&=
d^{p-1}\int_{B_{\frac{1}{2}}(0)}|\nabla u(dx+x_0)|^{p-2}(\nabla u(dx+x_0), \nabla \varphi)\, dx\\\nonumber
&&=d^{p-n}\int_{B_{\frac{d}{2}}(x_0)}|\nabla u(x)|^{p-2}(\nabla u(x),\nabla (\varphi (\frac{x-x_0}{d}) ))\, dx=d^{p-n}\int_{B_{\frac{d}{2}}(x_0)} f(u(x))\varphi (\frac{x-x_0}{d})\,dx\\\nonumber
&&=d^{p}\int_{B_{\frac{1}{2}}(0)} f(u(dx+x_0))\varphi (x)\,dx=\int_{B_{\frac{1}{2}}(0)}c(x)(g(x))^{p-1}\varphi (x)\,dx,
\end{eqnarray}
with
\begin{equation}\label{eq:popopo}
c(x):= d^p
\frac{f(u(dx+x_0))}{u^{p-1}(dx+x_0)}.\end{equation}
From \eqref{eq:fantozzi} we deduce that in distributional  sense
\[-\Delta_p g= c(x)g^{p-1}\quad \text{in}\,\, B_{\frac{1}{2}}(0).\]
On the other hand   $u$ as well (in distributional sense) is a positive solution to  $-\Delta_p u=f(u)$  in $B_d(x_0)$. Therefore using   \cite[Theorem 3.1]{PQS} we have
\begin{equation}\label{eq:guhball}
0<u(x) \leq C (1+d^{-\frac{p}{q+1-p}}),
\end{equation}
where $C=C(f,n,p)>0$. By \eqref{eq:popopo}, using {\bf($A_f^1$)}  we have
\begin{equation}\label{eq:neveeeee}
c(x)=Cd^p(1+u^{q+1-p}),
\end{equation}
with $C=C(l,p,K_f)$ is a positive constant. Finally, collecting \eqref{eq:guhball} and \eqref{eq:neveeeee} we deduce
\begin{eqnarray}\nonumber
c(x) \leq C d^p  ( 1+
d^{-p})\leq C,
\end{eqnarray}
with $C=C(f,l,n,p,q,K_f,\Omega)$.
Hence $c(x) \in L^\infty(B_{{1}/{2}}(0))$.
By
\cite[Theorem $7.2.1$]{PucciSerrin}, recalling \eqref{eq:scaling},  for every $x\in B_{1/8}(0)$ it follows
\begin{eqnarray}\label{eq:harnack}
&&g(x) \leq \sup_{x \in B_{\frac{1}{4}}(0)} g(x) \\\nonumber
&&\leq C_H
\inf_{x \in B_{\frac{1}{4}}(0)} g(x) \leq C_H g(0) \leq C
\end{eqnarray}
where $C=C(f,l,n,p,q,K_f,\Omega)$ is a positive constant. Hence $g$ is bounded in
$B_{{1/8}}(0)$ and as consequence, see e.g.
\cite{DB, T}   \[g \in C^{1,\alpha}(B_{\frac{1}{16}}(0)).\]  Then there exists a positive constant $C=C(n,p,\lambda, \|u\|_{L^\infty(\Omega_\lambda)})$ such that
$$|\nabla g(x)|  \leq C \quad \forall x \in B_{\frac{1}{16}}(0).$$
By \eqref{eq:scaling} it follows
$$d  |\nabla u(dx+x_0)| \leq C \quad\forall x \in B_{\frac{1}{16}}(0),$$
namely
\begin{equation}\label{eq:playstation}|\nabla u| \leq \frac{C}{d} \quad  \text{in} \,\, B_{\frac{d}{16}}(x_0).
\end{equation}
Using {\bf ($A_\Gamma^1$)},  we can consider
$\mathcal{B}_\varepsilon$ a tubular neighborhood of radius
$\varepsilon$ of $\mathcal{M}$, i.e.
$$\mathcal{B}_\varepsilon:=\{x \in \Omega \ : \ \text{dist}(x,\mathcal{M}) < \varepsilon \}.$$
We now exploit an integration in  Fermi coordinates,see e.g. \cite{PPS}. We indicate a point of $\mathcal{B}_\varepsilon$ via the coordinate $(\sigma,x)'$ where $\sigma$ is the variable describing the manifold $\mathcal M$ and $x'\in\mathbb{R}^k$ is the Euclidean variable on the normal section. For $\sigma$ fixed, $D_\sigma$ will stand for the normal section at $\sigma$.
By \eqref{eq:playstation},  and passing to polar coordinates
we obtain
\begin{equation} \label{eq:weight}
\begin{split}
\int_{R_\lambda(\Omega_\lambda^+)} \left(|\nabla u|^{2-p}\right)^t
\, dx &= \int_{R_\lambda(\Omega_\lambda^+) \setminus
\mathcal{B}_\varepsilon} \left(|\nabla u|^{2-p}\right)^t \, dx +
\int_{\mathcal{B}_\varepsilon} \left(|\nabla u|^{2-p}\right)^t \, dx \\
&\leq C + C\int_{\mathcal{M}} d \sigma \int_{D_\sigma} \left(|\nabla u|^{2-p}\right)^t \, dx'\\
&\leq  C +  C \int_{\mathcal{M}} d \sigma \int_0^{\varepsilon} \frac{1}{r^{(2-p)t-(k-1)}} \, d r\\
&=C(n,p,\lambda, \|u\|_{L^\infty(\Omega_\lambda)}) +CE_1,
\end{split}
\end{equation}
 with
\begin{equation}\label{eq:weight2}
E_1:=\int_{\mathcal{M}} d \sigma \int_0^{\varepsilon}
\frac{1}{r^{(2-p)t-(k-1)}} \, d r < +\infty,
\end{equation}
if  $t < {k}/{(2-p)}$,
recalling that  $1 < p < 2$. Hence, since $k \geq {n}/{2}$, inequality \eqref{eq:weight2} holds for some
$$t \in \left(\frac{n}{2} , \frac{k}{2-p}\right),$$
being  ${2k} > {n(2-p)}$ under our assumption.
\end{proof}

Let us now set
$$\mathcal{Z}_\lambda:=\{x \in \Omega_\lambda  \setminus R_\lambda (\Gamma) \ | \ \nabla u (x) = \nabla u_\lambda (x) = 0\}.$$
We have the following
\begin{lem}\label{conncomp}
Let $u$ be a solution to \eqref{debil1} with $f: \R \rightarrow \R$ be a
locally Lipschitz  function  such that $f(s)>0$ for $s>0$. Let $a < \lambda
<0$. If $C_\lambda \subset \Omega_\lambda \setminus
\left(R_\lambda(\Gamma) \cup \mathcal{Z}_\lambda \right)$ is a
connected component of $\Omega_\lambda \setminus
\left(R_\lambda(\Gamma) \cup \mathcal{Z}_\lambda \right)$ and 
$u=u_\lambda$ in $C_\lambda$, then \[C_\lambda = \emptyset.\]
\end{lem}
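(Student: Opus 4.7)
The plan is to argue by contradiction. Suppose $C_\lambda\neq\emptyset$; I will derive a contradiction from the incompatibility of the Dirichlet condition $u=0$ on $\partial\Omega$ with the strict interior positivity $u>0$ in $\Omega\setminus\Gamma$, which is guaranteed by the assumption $f(s)>0$ for $s>0$.

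On $C_\lambda$ the identity $u\equiv u_\lambda$ forces $\nabla u\equiv \nabla u_\lambda$, and since $C_\lambda\cap\mathcal{Z}_\lambda=\emptyset$ this common gradient does not vanish on $C_\lambda$. Standard regularity for $-\Delta_p$ (DiBenedetto, Tolksdorf) then yields $u,\,u_\lambda\in C^{2,\alpha}_{\mathrm{loc}}(C_\lambda)$, and by continuity of $u,\,u_\lambda$ in $\cl{\Omega}\setminus(\Gamma\cup R_\lambda(\Gamma))$ the identity $u=u_\lambda$ extends to $\cl{C_\lambda}\setminus R_\lambda(\Gamma)$; in particular it holds at every accessible boundary point of $\partial\Omega\cap\cl{C_\lambda}$.

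The decisive step is to produce $x^{\ast}\in\partial\Omega\cap\cl{C_\lambda}$ with $R_\lambda(x^{\ast})\in\Omega\setminus\Gamma$. Because $C_\lambda$ is a connected component of the open set $\Omega_\lambda\setminus(R_\lambda(\Gamma)\cup\mathcal{Z}_\lambda)$, the portion of $\partial C_\lambda$ lying inside $\Omega_\lambda$ is contained in $R_\lambda(\Gamma)\cup\mathcal{Z}_\lambda$. The zero $p$-capacity of $R_\lambda(\Gamma)$ (hence very small Hausdorff content) together with the topological smallness of the critical set $\mathcal{Z}_\lambda$ (a recurrent ingredient in moving-plane arguments for $-\Delta_p$ with $f(u)>0$) prevent this set from constituting a closed barrier trapping $C_\lambda$ strictly inside $\Omega_\lambda$. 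Consequently $\cl{C_\lambda}$ must meet $\partial\Omega_\lambda=T_\lambda\cup(\partial\Omega\cap\cl{\Omega_\lambda})$; contact with $T_\lambda$ is harmless because $u=u_\lambda$ there automatically, so $\cl{C_\lambda}$ reaches $\partial\Omega\cap\cl{\Omega_\lambda}$. The convexity and $x_1$-symmetry of $\Omega$ with $\lambda<0$ guarantee that every $x\in\partial\Omega$ with $a<x_1<\lambda$ satisfies $R_\lambda(x)\in\operatorname{int}\Omega$; since $\Gamma\subset\{x_1=0\}$ only the lower-dimensional slice $\{x_1=2\lambda\}\cap\partial\Omega$ could map into $\Gamma$ under $R_\lambda$, and this is readily avoided when selecting $x^{\ast}$.

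Evaluating the propagated identity at $x^{\ast}$ yields $u(x^{\ast})=u_\lambda(x^{\ast})=u(R_\lambda(x^{\ast}))$: the left-hand side vanishes by the Dirichlet condition, while the right-hand side is strictly positive since $R_\lambda(x^{\ast})\in\Omega\setminus\Gamma$ and $u>0$ there. This contradiction forces $C_\lambda=\emptyset$. The genuine obstacle in the plan is the boundary-touching claim in the preceding paragraph: one must rule out that $R_\lambda(\Gamma)\cup\mathcal{Z}_\lambda$, despite having vanishing Lebesgue measure, acts as a topological barrier isolating $C_\lambda$ from $\partial\Omega_\lambda$. The capacity hypothesis disposes of the $R_\lambda(\Gamma)$ contribution quickly, but a careful handling of $\mathcal{Z}_\lambda$ is required, either via a path-perturbation argument inside the non-degenerate (uniformly elliptic) region where $|\nabla u|=|\nabla u_\lambda|>0$, or via the structural results on the critical set of $p$-Laplace solutions with strictly positive right-hand side that the paper relies on throughout.
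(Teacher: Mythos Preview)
Your argument has a genuine gap precisely where you flag it yourself: the claim that $\overline{C_\lambda}$ must reach $\partial\Omega$. You assert that $R_\lambda(\Gamma)\cup\mathcal{Z}_\lambda$ cannot separate $C_\lambda$ from the boundary, invoking zero $p$-capacity for the first piece and unspecified ``structural results'' for the second, but neither is made rigorous. For the $p$-Laplacian the critical set $\mathcal{Z}_\lambda$ is only known to have zero Lebesgue measure (this is exactly what the paper uses later, via \cite{DSJDE}), and vanishing measure says nothing about separation: a measure-zero closed set such as a sphere disconnects $\mathbb{R}^n$. Neither the path-perturbation nor a dimension bound on $\mathcal{Z}_\lambda$ is supplied, so the decisive step is missing. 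Your route also relies on convexity and symmetry of $\Omega$ to ensure $R_\lambda(x^\ast)\in\Omega$, hypotheses not stated in the lemma itself.

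The paper's proof bypasses the boundary-touching issue entirely via an integral argument. Working on the symmetrized set $C:=C_\lambda\cup R_\lambda(C_\lambda)$, it tests the equation with
\[
\varphi_\varepsilon:=h_\varepsilon(|\nabla u|)\,\psi_\varepsilon^2\,\chi_C,
\]
where $h_\varepsilon$ is a cut-off in the variable $|\nabla u|$ (vanishing for $|\nabla u|\leq\varepsilon$, equal to $1$ for $|\nabla u|\geq 2\varepsilon$) and $\psi_\varepsilon$ is the capacity cut-off near $\Gamma\cup R_\lambda(\Gamma)$. Because $\varphi_\varepsilon$ vanishes both near the singular set and near the degenerate set $\{\nabla u=0\}=\partial C\cap\mathcal{Z}_\lambda$, it is admissible despite the irregular boundary of $C$; the symmetry $u=u_\lambda$ on $C_\lambda$ transfers the $C^{1,\alpha}$ regularity to $R_\lambda(C_\lambda)$. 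Using the $L^1$ summability of $|\nabla u|^{p-2}\|D^2u\|$ from \cite{DSJDE} and the capacity estimate \eqref{eq:cambio4}, the gradient terms tend to zero as $\varepsilon\to 0$, leaving $0<\int_C f(u)\,dx\leq 0$. No analysis of $\partial\Omega\cap\overline{C_\lambda}$ is needed.
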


\begin{proof}
Let
$$C:=C_\lambda \cup R_\lambda (C_\lambda).$$
Arguing by contradiction we assume $C \neq \emptyset$. Now for $\varepsilon >0$, we define $h_{\varepsilon}(t):\mathbb R^+_0\rightarrow \mathbb R$ as
\begin{equation}\nonumber
h_\varepsilon(t)=\begin{cases}\frac{G_\varepsilon(t)}{t} & \text{if} \,\, t>0\\
0& \text{if} \,\, t=0,
\end{cases}
\end{equation} where
$G_{\varepsilon}(t)= (2t-2\varepsilon)\chi_{[\varepsilon\,
,\,2\varepsilon]}(t)+t\chi_{[2\varepsilon\, ,\,\infty)}(t)$ for
$t\geqslant 0$. Moreover we consider the cut-off
function $\psi_\varepsilon$ on the set $\Gamma \cup
R_\lambda(\Gamma)$ defined in a similar way as in \eqref{test1}.
Hence we define the test function
$$\varphi_\varepsilon := h_\varepsilon (|\nabla u|) \psi^2_\varepsilon \chi_C.$$
 We point out that the $\text{supp}\, \varphi_\varepsilon \subset C$ and therefore we can use it as test function  in \eqref{debil1}. We obtain
\begin{equation} \nonumber
\begin{split}
0<\int_C f(u)h_\varepsilon (|\nabla u|)\psi^2_\varepsilon \,dx  &= \, \int_C |\nabla u|^{p-2} (\nabla u,
\nabla |\nabla u|) h_{\varepsilon}'(|\nabla u|) \psi_\varepsilon^2 \,dx\,\\
&+ 2\int_C |\nabla u|^{p-2} (\nabla u, \nabla \psi_\varepsilon)
h_{\varepsilon}(|\nabla u|)\psi_\varepsilon
\,dx.
\end{split}
\end{equation}
Using  Schwartz inequality, observing that  \[h_\varepsilon(t)\leq 1\quad \text{and} \quad h'_\varepsilon(t)\leq 2/\varepsilon,\]  we
obtain
\begin{equation} \label{hkbgikbuhjb}
\begin{split}
&0<\int_C f(u)\frac{G_\varepsilon(|\nabla u|)}{|\nabla u|}
\psi^2_\varepsilon \,dx\\  &\leq \, 2\int_{C\cap\{\varepsilon <
|\nabla u| < 2\varepsilon\}}
|\nabla u|^{p-2} \|D^2 u\| \psi_\varepsilon^2  \frac{|\nabla u|}{\varepsilon}   \,dx+ 2 \int_C |\nabla u|^{p-1}|\nabla \psi_\varepsilon|
\psi_\varepsilon \,dx\\
&\leq 4 \int_{C\cap\{\varepsilon <
|\nabla u| < 2\varepsilon\}} |\nabla
u|^{p-2} \|D^2 u\| \psi_\varepsilon^2 \, dx + 2 \int_C |\nabla u|^{p-1}|\nabla \psi_\varepsilon|
\psi_\varepsilon \,dx\\
&\leq4 \int_C |\nabla
u|^{p-2} \|D^2 u\| \psi_\varepsilon^2 \chi_{A_\varepsilon}\, dx + 2 \left(\int_C |\nabla u|^{p}\,dx\right)^{\frac{p-1}{p}}\left(\int_C|\nabla \psi_\varepsilon|^p\,dx\right)^{\frac 1p},
\end{split}
\end{equation}
where $A_\varepsilon:= C \cap\{\varepsilon < |\nabla u| <
2\varepsilon\}$. Now we note that by the definition of the region   $C$ and because $u=u_\lambda$ in $C_\lambda$,  then the solution $u$ is bounded and  $C^{1,\alpha}$ by classical regularity results.
Moreover
\[\displaystyle |\nabla u|^{p-2} \|D^2 u\| \psi_\varepsilon^2
\chi_{A_\varepsilon} \leq |\nabla u|^{p-2} \|D^2 u\|
\] and $\displaystyle |\nabla u|^{p-2} \|D^2 u\|
 \in L^1(\mathcal C)$ by \cite{DSJDE} (see also \cite [Lemma 5]{MMPS} for details).
It is important to note that the regularity of the solution in $R_\lambda (C_\lambda)$ is induced by symmetry by the regularity in $C_\lambda$.
Noticing that
$\displaystyle |\nabla u|^{p-2} \|D^2 u\| \psi_\varepsilon^2
\chi_{A_\varepsilon} \rightarrow 0$ as $\varepsilon$ goes to
$0$, then letting $\varepsilon \rightarrow 0$ in
\eqref{hkbgikbuhjb}, by Dominated Convergence Theorem and \eqref{eq:cambio4} it follows
$$0< \int_C f(u) \, dx \leq 0,$$
and this gives a contradiction. Hence $C=\emptyset$.

\end{proof}

\begin{proof}[Proof of Theorem \ref{mainbis}]
Since the singular set $\Gamma$ is contained in the hyperplane
$\{x_1 = 0\}$, then the moving plane procedure can be started in the
standard way (see e.g. \cite{DamPac, DamSciunzi, DSJDE}) and, for $a
< \lambda < a + \sigma$ with $\sigma>0$ small, we have that
$w_\lambda \leq 0$ in $\Omega_\lambda$ (see
\eqref{eq:thankyouuuuuu}) by the weak comparison principle in small
domains. Note that the crucial point here is that $w_\lambda$ has a
singularity at $\Gamma$ and at $R_\lambda (\Gamma)$. For $\lambda$
close to $a$ the singularity does not play a role. To proceed
further we define
\begin{equation}\nonumber
\Lambda_0=\{a<\lambda<0 : u\leq
u_{t}\,\,\,\text{in}\,\,\,\Omega_t\setminus
R_t(\Gamma)\,\,\,\text{for all $t\in(a,\lambda]$}\}
\end{equation}
and $\lambda_0 = \sup \Lambda_0$, since we proved above that $\Lambda_0$ is not empty. To prove our result we have to
show that $\lambda_0 = 0$.
To do this we
assume that $\lambda_0 < 0$ and we reach a contradiction by proving
that $u \leq u_{\lambda_0 + \tau}$ in $\Omega_{\lambda_0 + \tau}
\setminus R_{\lambda_0 + \tau} (\Gamma)$ for any $0 < \tau <
\bar{\tau}$ for some small $\bar{\tau}>0$. We  remark that
$|\mathcal{Z}_{\lambda_0}|=0$, see  \cite{DSJDE}.
Let us take $A_{\lambda_0}\subset \Omega_{\lambda_0}$ be an open set such that $\mathcal Z_{\lambda_0}\cap\Omega_{\lambda_0}\subset
A_{\lambda_0} \subset \subset \Omega$. Such set exists  by H\"opf's Lemma. Moreover note  that, since $|\mathcal Z_{\lambda_0}|=0$, we can take $A_{\lambda_0}$ of arbitrarily small measure. By
continuity we know that $u \leq u_{\lambda_0}$ in $\Omega_{\lambda_0}
\setminus R_{\lambda_0} (\Gamma)$.
We can exploit the strong comparison principle, see e.g. \cite[Theorem 2.5.2]{PucciSerrin}  to get that, in any connected component of $\Omega_{\lambda_{0}}\setminus \mathcal Z_{\lambda_0}$, we have
$$
u<u_{\lambda_0} \qquad\text{or}\qquad u\equiv u_{\lambda_0}.$$
 The case $u\equiv u_{\lambda_0}$ in some  connected component
$C_{\lambda_{0}}$ of $\Omega_{\lambda_{0}}\setminus \mathcal Z_{\lambda_0}$ is not
possible, since by symmetry, it would imply the existence of  a local symmetry phenomenon and consequently that $\Omega \setminus \mathcal  Z_{\lambda_0}$ would be not connected,  in  spite of what we proved in Lemma \ref{conncomp}. Hence we deduce that $u <
u_{\lambda_0}$ in $\Omega_{\lambda_0} \setminus R_{\lambda_0}
(\Gamma)$. Therefore, given a compact set $K \subset
\Omega_{\lambda_0} \setminus (R_{\lambda_0} (\Gamma)\cup A_{\lambda_0})$, by
uniform continuity we can ensure that $u < u_{\lambda_0+\tau}$ in
$K$ for any $0 < \tau < \bar{\tau}$ for some small $\bar{\tau}>0$.
Note that to do this we implicitly assume, with no loss of
generality, that $R_{\lambda_0} (\Gamma)$ remains bounded away
from K. Arguing in a similar fashion as in Lemma \ref{leaiuto}, we
consider
\begin{equation}\label{eq:moduloooooooo}\varphi_\varepsilon := w^+_{\lambda_0 + \tau} \psi_\varepsilon^p \chi_{\Omega_{\lambda_0 + \tau}}.
\end{equation}
By density arguments as above, we plug $\varphi_\varepsilon$ as test
function in \eqref{debil1} and \eqref{debil150} so that,
subtracting, we get

\begin{equation} \label{eq:Step1}
\begin{split} &\int_{\Omega_{\lambda_0 + \tau} \setminus K} (|\nabla u|^{p-2} \nabla u
- |\nabla u_{\lambda_0 + \tau}|^{p-2}\nabla u_{\lambda_0 + \tau},
 \nabla w^+_{\lambda_0 + \tau}) \psi_\varepsilon^p \, dx\\
&+ p \int_{\Omega_{\lambda_0 + \tau} \setminus K} (|\nabla u|^{p-2}
\nabla u - |\nabla u_{\lambda_0 + \tau}|^{p-2} \nabla u_{\lambda_0 +
\tau}, \nabla \psi_\varepsilon)
\psi_\varepsilon^{p-1} w_{\lambda_0 + \tau}^+ \, dx \\
&= \int_{\Omega_{\lambda_0 + \tau} \setminus K} (f(u)-f(u_\lambda))
w_{\lambda_0 + \tau}^+ \psi_\varepsilon^p \, dx.
\end{split}
\end{equation}
Now we  split the set $\Omega_{\lambda_0 + \tau}
\setminus K$ as the union of two disjoint subsets $A_{\lambda_0 +
\tau}$ and $B_{\lambda_0 + \tau}$ such that $\Omega_{\lambda_0 +
\tau} \setminus K= A_{\lambda_0 + \tau} \cup B_{\lambda_0 + \tau}$.
In particular, for $ \dot C>1$,   we set
\begin{equation}\nonumber
\begin{split}
A_{\lambda_0 + \tau} &= \{ x \in \Omega_{\lambda_0 + \tau} \setminus
K \ : \ |
\nabla u_{\lambda_0 + \tau} (x)| < \dot C| \nabla u (x)|  \}\\ \\
\text{and}\\ \\
B_{\lambda_0 + \tau} &= \{ x \in \Omega_{\lambda_0 + \tau} \setminus
K \ : \ | \nabla u_{\lambda_0 + \tau} (x)| \geq  \dot C| \nabla u
(x)|\}.
\end{split}
\end{equation}
From \eqref{eq:Step1}, using \eqref{eq:inequalities} and {\bf
($A_f^1$)}, repeating verbatim arguments in
\eqref{eq:cambio5},
\eqref{eq:asjdhjaskhdjbjhbhjwefohi} and in
\eqref{eq:asdadajdhjaskhdjbjhbhjwefohi} we have

\begin{equation}\nonumber
\begin{split}
& \int_{\Omega_{\lambda_0 + \tau} \setminus K} (|\nabla u| + |\nabla
u_{\lambda_0 + \tau}|)^{p-2} |\nabla w_{\lambda_0 + \tau}^+|^2
\psi_\varepsilon^p \, dx
\\& \leq \delta C \int_{\Omega_{\lambda_0 + \tau} \setminus K} (|\nabla u| + |\nabla
u_{\lambda_0 + \tau}|)^{p-2} |\nabla w_{\lambda_0 + \tau}^+|^2
\psi_\varepsilon^p \, dx + C
\left(\int_{\Omega_\lambda}|\nabla u|^p\,dx \right)^{\frac{p-1}{p}}\left(
 \int_{\Omega_\lambda} |\nabla \psi_\varepsilon|^p \, dx \right)^{\frac{1}{p}} \\
&+\frac{C}{ \delta} \int_{\Omega_{\lambda_0 + \tau} \setminus K}
|\nabla \psi_\varepsilon|^p \, dx + K_f\int_{\Omega_{\lambda_0 +
\tau} \setminus K} (w_{\lambda_0+\tau}^+)^2 \psi_\varepsilon^p \, dx,
\end{split}
\end{equation}
for some positive constant $C=C(p,
\lambda,\|u\|_{L^{\infty}(\Omega_{\lambda+\bar \tau})})$.
Taking $\delta>0$ sufficiently small and using {\bf ($A_\Gamma^1$)}, as we did above passing to the
limit for $\varepsilon \rightarrow 0$ we obtain
\begin{equation}\label{eq:final}
\int_{\Omega_{\lambda_0 + \tau} \setminus K} (|\nabla u| + |\nabla
u_{\lambda_0 + \tau}|)^{p-2} |\nabla w_{\lambda_0 + \tau}^+|^2 \, dx
\leq CK_f \int_{\Omega_{\lambda_0 + \tau} \setminus K} (w_{\lambda_0
+ \tau}^+)^2 \, dx.
\end{equation}
Now we set
$$\varrho :=\left(1+|\nabla u|^2 + |\nabla u_\lambda|^2 \right)^{\frac{p-2}{2}}$$
in order to exploit the  weighted Sobolev inequality from \cite{Tru}. The results of \cite{Tru} apply if
$\varrho \in L^1(\Omega_\lambda)$ and
$$\displaystyle \frac{1}{\varrho}\in L^t(\Omega_\lambda),$$ for some $t
>{n}/{2}$. In particular, $H^1_{0,\varrho} (\Omega')$ (see \cite{DSJDE,Tru})  coincides with the closure of $C^\infty_c(\Omega')$ with respect to the norm
\[
\|w\|_\varrho\,:=\,\||\nabla w|\|_{L^2(\Omega',\varrho)}:=\left(\int_{\Omega'}\rho |\nabla w|^2\, dx \right)^{\frac 12}\,
\]
and it holds that
\[
\|w\|_{L^{2^*_\varrho}(\Omega')}\,\leq C_S \,\||\nabla w|\|_{L^2(\Omega',\varrho)}\qquad\text{for any}\,\,\,w\in H^1_{0,\varrho} (\Omega')\,,
\]
where
$$\frac{1}{2^*_\varrho}:=\frac{1}{2}\left(1+\frac{1}{t}\right)-\frac{1}{n}.$$
Note that
\begin{equation}\label{eq:weight4}
\left( 1+|\nabla u|^2+|\nabla u_{\lambda_0+\tau}|^2
\right)^{\frac{2-p}{2}} \leq K_1 + K_2 |\nabla
u_{\lambda_0+\tau}|^{2-p},
\end{equation}
in $\Omega_{\lambda_0+\tau}^+:=\Omega_{\lambda_0+\tau} \cap
\text{supp}\,(w_{\lambda_0+\tau}^+)$, where $K_1$ and $K_2$ are
positive constants depending only on $p$ and on $\|u\|_{C^1(\bar\Omega_{\lambda_0+\bar \tau})}$. By Lemma \ref{sumweight} and \eqref{eq:weight4}, we deduce that
\[
\frac{1}{\varrho}:=\left( 1+|\nabla u|^2+|\nabla
u_{\lambda_0+\tau}|^2 \right)^{\frac{2-p}{2}} \in
L^t(\Omega_{\lambda_0+\tau}),\] for some $t > {n}/{2}$ and this allows us to use the above mentioned results of \cite{Tru}.
We shall use the fact that
\begin{equation}\label{eq:weight3}
\left(|\nabla u|+|\nabla u_{\lambda_0+\tau}|\right)^{2-p} \leq
2^{\frac{2-p}{2}} \left( |\nabla u|^2+|\nabla u_{\lambda_0+\tau}|^2
\right)^{\frac{2-p}{2}} \leq 2^{\frac{2-p}{2}} \left( 1+|\nabla
u|^2+|\nabla u_{\lambda_0+\tau}|^2 \right)^{\frac{2-p}{2}}.
\end{equation}
In particular, by
 \eqref{eq:weight3}, H\"older inequality and weighted
Sobolev inequality, in
\eqref{eq:final}, we obtain
\begin{equation}\label{eq:Poincare}
\begin{split}
\int_{\Omega_{\lambda_0 + \tau} \setminus K} \varrho |\nabla
w_{\lambda_0+\tau}^+|^2 \, dx &\leq 2^{\frac{2-p}{2}}
\int_{\Omega_{\lambda_0 + \tau} \setminus K} (|\nabla u| + |\nabla
u_{\lambda_0+\tau}|)^{p-2}
|\nabla w_{\lambda_0+\tau}^+|^2 \, dx \\
& \leq 2^{\frac{2-p}{2}} C K_f \int_{\Omega_{\lambda_0 + \tau}
\setminus K}
(w_{\lambda_0+\tau}^+)^2 \, dx\\
&\leq 2^{\frac{2-p}{2}} C K_f |\Omega_{\lambda_0+\tau} \setminus
K|^{\frac{1}{(\frac{2}{2^*_\varrho})'}}
\left(\int_{\Omega_{\lambda_0 + \tau} \setminus K}
(w_{\lambda_0+\tau}^+)^{2^*_\varrho} \, dx
\right)^{\frac{2}{2^*_\varrho}}\\
& \leq 2^{\frac{2-p}{2}} C K_f C_p(|\Omega_{\lambda_0 + \tau}
\setminus K|)\int_{\Omega_{\lambda_0 + \tau} \setminus K} \varrho
|\nabla w_{\lambda_0+\tau}^+|^2  dx,
\end{split}
\end{equation}
where $C_p(\cdot)$ tends to zero if the measure of the domain tends to zero. For $\bar{\tau}$
small and $K$ large, we may assume that
$$2^{\frac{2-p}{2}} C K_f C_p(|\Omega_{\lambda_0+\tau} \setminus K|) < \frac{1}{2}$$
so that by \eqref{eq:Poincare}, we deduce that
$$\int_{\Omega_{\lambda_0+\tau}} \varrho |\nabla
w_{\lambda_0+\tau}^+|^2 \, dx = \int_{\Omega_{\lambda_0 + \tau}
\setminus K}  \varrho |\nabla w_{\lambda_0+\tau}^+|^2 \, dx  \leq
0,$$ proving that $u \leq u_{\lambda_0+\tau}$ in $\Omega_{\lambda_0
+ \tau} \setminus R_{\lambda_0 + \tau} (\Gamma)$ for any $0 < \tau <
\bar{\tau}$ for some small $\bar{\tau}>0$. Such a contradiction
shows that
$$ \lambda_0 = 0.$$
Since the moving plane procedure can be performed in the same way
but in the opposite direction, then this proves the desired symmetry
result. The fact that the solution is increasing in the
$x_1$-direction in $\{x_1 < 0\}$ is implicit in the moving plane
procedure.

\end{proof}
\begin{proof}[Proof of Theorem \ref{main}]

Arguing verbatim as in the previous case up to \eqref{eq:moduloooooooo},
we consider
$$\varphi_\varepsilon := w^+_{\lambda_0 + \tau} \psi_\varepsilon^p \chi_{\Omega_{\lambda_0 + \tau}}$$
and by density arguments, we plug it  as test
function in \eqref{debil1} and \eqref{debil2}. Subtracting,
we get
\begin{equation} \label{eq:Step1ggg}
\begin{split} &\int_{\Omega_{\lambda_0 + \tau} \setminus K} (|\nabla u|^{p-2} \nabla u
- |\nabla u_{\lambda_0 + \tau}|^{p-2}\nabla u_{\lambda_0 + \tau},
 \nabla w^+_{\lambda_0 + \tau}) \psi_\varepsilon^p \, dx\\
&+ p \int_{\Omega_{\lambda_0 + \tau} \setminus K} (|\nabla u|^{p-2}
\nabla u - |\nabla u_{\lambda_0 + \tau}|^{p-2} \nabla u_{\lambda_0 +
\tau}, \nabla \psi_\varepsilon)
\psi_\varepsilon^{p-1} w_{\lambda_0 + \tau}^+ \, dx \\
&= \int_{\Omega_{\lambda_0 + \tau} \setminus K} (f(u)-f(u_\lambda))
w_{\lambda_0 + \tau}^+ \psi_\varepsilon^p \, dx.
\end{split}
\end{equation}
Using the split
\begin{equation}\nonumber
\begin{split}
A_{\lambda_0 + \tau} &= \{ x \in \Omega_{\lambda_0 + \tau} \setminus
K \ : \ |
\nabla u_{\lambda_0 + \tau} (x)| < \dot C| \nabla u (x)|  \},\\
B_{\lambda_0 + \tau} &= \{ x \in \Omega_{\lambda_0 + \tau} \setminus
K \ : \ | \nabla u_{\lambda_0 + \tau} (x)| \geq  \dot C| \nabla u
(x)|\},
\end{split}
\end{equation}
from \eqref{eq:Step1ggg}, using \eqref{eq:inequalities},{\bf
($A_f^2$)} and arguing as in Lemma \ref{leaiuto}, we
obtain
\begin{equation}\nonumber
\begin{split}
&\int_{\Omega_{\lambda_0 + \tau} \setminus K} (|\nabla u| + |\nabla
u_{\lambda_0 + \tau}|)^{p-2} |\nabla w_{\lambda_0 + \tau}^+|^2
\psi_\varepsilon^p \, dx \\
&\leq \delta C \int_{\Omega_{\lambda_0 + \tau} \setminus K} (|\nabla
u| + |\nabla
u_{\lambda_0 + \tau}|)^{p-2} |\nabla w_{\lambda_0 + \tau}^+|^2 \psi_\varepsilon^p \, dx \\
&+ \frac{C}{\delta} \left(\int_{\Omega_{\lambda_0 + \tau} \setminus
K} |\nabla u|^p \, dx \right)^{\frac{p-2}{p}}
\left(\int_{\Omega_{\lambda_0 + \tau} \setminus K} |\nabla
\psi_\varepsilon|^p \, dx
\right)^{\frac{2}{p}}\\
& + K_f\int_{\Omega_{\lambda_0 + \tau} \setminus K}
(w_{\lambda_0+\tau}^+)^2 \psi_\varepsilon^p \, dx,
\end{split}
\end{equation}
for some positive constant $C=C(p,
\lambda,\|u\|_{L^{\infty}(\Omega_\lambda+\bar \tau)})$.
As we did above passing to the limit for $\varepsilon \rightarrow
0$, by Fatou's Lemma we obtain
\begin{equation}\label{eq:final2}
\int_{\Omega_{\lambda_0 + \tau} \setminus K} (|\nabla u| + |\nabla
u_{\lambda_0 + \tau}|)^{p-2} |\nabla w_{\lambda_0 + \tau}^+|^2 \, dx
\leq C K_f \int_{\Omega_{\lambda_0 + \tau} \setminus K}
(w_{\lambda_0 + \tau}^+)^2 \, dx.
\end{equation}

\noindent In this case we have $|\nabla u|^{p-2} \leq (|\nabla u| +
|\nabla u_{\lambda_0+\tau}|)^{p-2}$ since $p>2$. Then we set $\rho:=|\nabla
u|^{p-2}$ and we see that $\rho$ is bounded in
$\Omega_{\lambda_0+\tau}$, hence $\rho \in L^1(\Omega_{\lambda_0 +
\tau})$. By applying the weighted Poincar\'{e} inequality to
\eqref{eq:final2}, see \cite[Theorem $1.2$]{DSJDE}, we deduce that
\begin{equation}\label{eq:final3}
\begin{split}
\int_{\Omega_{\lambda_0 + \tau} \setminus K} \rho |\nabla
w_{\lambda_0 + \tau}^+|^2 \, dx &\leq \int_{\Omega_{\lambda_0 +
\tau} \setminus K} (|\nabla u| + |\nabla u_{\lambda_0 +
\tau}|)^{p-2} |\nabla w_{\lambda_0 + \tau}^+|^2 \, dx\\
& \leq C K_f \int_{\Omega_{\lambda_0
+ \tau} \setminus K} (w_{\lambda_0 + \tau}^+)^2 \, dx\\
&\leq C K_f C_p(|\Omega_{\lambda_0 + \tau} \setminus K|)
\int_{\Omega_{\lambda_0 + \tau} \setminus K} \rho |\nabla
w_{\lambda_0 + \tau}^+|^2 \, dx
\end{split}
\end{equation}
where $C_p(\cdot)$ tends to zero if the measure of the domain tends to zero. For $\bar{\tau}$
small and $K$ large, we may assume that
$$C K_f C_p(|\Omega_{\lambda_0 + \tau} \setminus K|) < \frac{1}{2}$$
so that by \eqref{eq:final3}, we deduce that
$$\int_{\Omega_{\lambda_0+\tau}} \rho |\nabla
w_{\lambda_0+\tau}^+|^2 \, dx = \int_{\Omega_{\lambda_0 + \tau}
\setminus K}  \rho |\nabla w_{\lambda_0+\tau}^+|^2 \, dx  \leq 0,$$
proving that $u \leq u_{\lambda_0+\tau}$ in $\Omega_{\lambda_0 +
\tau} \setminus R_{\lambda_0 + \tau} (\Gamma)$ for any $0 < \tau <
\bar{\tau}$ for some small $\bar{\tau}>0$. Such a contradiction
shows that
$$ \lambda_0 = 0.$$
Since the moving plane procedure can be performed in the same way
but in the opposite direction, then this proves the desired symmetry
result. The fact that the solution is increasing in the
$x_1$-direction in $\{x_1 < 0\}$ is implicit in the moving plane
procedure.

\end{proof}

\end{document}